\documentclass[a4paper,12pt]{article}
\setlength{\textwidth}{16cm}
\setlength{\textheight}{23cm}
\setlength{\oddsidemargin}{0mm}
\setlength{\topmargin}{-1cm}

\usepackage{latexsym}
\usepackage{amsmath}
\usepackage{amssymb}
\usepackage{enumerate}

\usepackage{theorem}
\newtheorem{theorem}{Theorem}[section]
\newtheorem{proposition}[theorem]{Proposition}
\newtheorem{lemma}[theorem]{Lemma}
\newtheorem{corollary}[theorem]{Corollary}

\theorembodyfont{\rmfamily}
\newtheorem{proof}{\textmd{\textit{Proof.}}}

\newtheorem{remark}[theorem]{Remark}

\newtheorem{definition}[theorem]{Definition}

\makeatletter

\@addtoreset{equation}{section}
\makeatother

\newcommand{\qedd}{\hfill \Box}
\newcommand{\ve}{\varepsilon}
\newcommand{\lra}{\longrightarrow}
\newcommand{\wt}{\widetilde}
\newcommand{\wh}{\widehat}

\newcommand{\N}{\ensuremath{\mathbb{N}}}
\newcommand{\R}{\ensuremath{\mathbb{R}}}

\newcommand{\cD}{\ensuremath{\mathcal{D}}}

\newcommand{\cI}{\ensuremath{\mathcal{I}}}

\newcommand{\cN}{\ensuremath{\mathcal{N}}}
\newcommand{\cO}{\ensuremath{\mathcal{O}}}

\newcommand{\cU}{\ensuremath{\mathcal{U}}}

\def\inj{\mathop{\mathrm{inj}}\nolimits}

\def\ra{\mathop{\mathrm{rank}}\nolimits}

\def\Cut{\mathop{\mathrm{Cut}}\nolimits}
\def\Focal{\mathop{\mathrm{Foc}}\nolimits}

\title{Applications of Toponogov's comparison theorems for open triangles\footnote{
Mathematics Subject Classification (2010): 53C21, 53C22.}
\footnote{Keywords: 
radial curvature, 
Riemannian manifold with boundary, 
Toponogov's comparison theorem
}
}

\author{Kei KONDO \ $\cdot$ \ Minoru TANAKA}
\date{}
\pagestyle{plain}

\begin{document}%%%%%%%%%%%%%%%%%%%%%%%%%%%%%%%%%%%%%%%%%%%%%%%%%%%
\maketitle

\begin{abstract}
Recently we generalized Toponogov's comparison theorem to 
a complete Riemannian manifold with smooth convex boundary, where  
a geodesic triangle was replaced by an open (geodesic) triangle standing on the boundary 
of the manifold, and a model surface was replaced by the universal covering surface of 
a cylinder of revolution with totally geodesic boundary. 
The aim of this article is to prove splitting theorems of two types as an application. 
Moreover, we establish a weaker version of our Toponogov comparison theorem 
for open triangles, because the weaker version is quite enough to prove one of the splitting theorems.
\end{abstract}

\section{Introduction}\label{sec:int}
Words have fully expressed a matter of great importance for Toponogov's comparison theorem. 
However that may be, we can not stop telling the importance in Riemannian geometry. 
The comparison theorem has played a vital role in the comparison geometry, that is, 
the theorem gives us some techniques originating from Euclidean geometry. 
Such techniques, drawing a circle or a geodesic polygon, 
and joining two points by a minimal geodesic segment, 
are very powerful in the geometry. One may find concrete examples of such techniques in proofs of 
the maximal diameter theorem and the splitting theorem by Toponogov (\cite{T1}, \cite{T2}), 
the structure theorem with positive sectional curvature by Gromoll and Meyer (\cite{GM}), 
the soul theorem with non-negative sectional curvature by Cheeger and Gromoll (\cite{CG}), 
the diameter sphere theorem by Grove and Shiohama (\cite{GS}), etc.\par 
From the standpoint of the radial curvature geometry, 
we very recently generalized the Toponogov comparison theorem to 
a complete Riemannian manifold with smooth convex boundary, where  
a geodesic triangle was replaced by an open (geodesic) triangle standing on the boundary 
of the manifold, and a model surface was replaced by the universal covering surface of 
a cylinder of revolution with totally geodesic boundary (\cite[Theorem 8.4]{KT2}, which will be stated 
as Theorem \ref{thm4.9} in this article).

\bigskip

The aim of our article is to prove splitting theorems of two types as an application of 
Toponogov's comparison theorem for open triangles and a weaker version of the comparison theorem (Theorem \ref{weak}), respectively. The weaker version will be proved in this article. 

\bigskip

Now we will introduce the radial curvature geometry for manifolds with boundary: 
We first introduce our model, which will be later employed as a reference surface of 
comparison theorems in complete Riemannian manifolds with boundary. 
Let $\wt{M} := (\R, d\tilde{x}^2) \times_{m} (\R, d\tilde{y}^2)$
be a warped product of two $1$-dimensional Euclidean lines $(\R, d\tilde{x}^2)$ 
and $(\R, d\tilde{y}^2)$, where the warping function $m : \R \lra (0, \infty)$ is a positive 
smooth function satisfying $m(0) = 1$ and $m'(0) = 0$. Then we call 
\[
\wt{X} := \left\{ \tilde{p} \in \wt{M} \ | \ \tilde{x}(\tilde{p}) \ge 0 \right\} 
\]
a {\em model surface}. 
Since $m'(0) = 0$, the boundary 
$\partial \wt{X}:= \{ \tilde{p} \in \wt{X}\, |\, \tilde{x}(\tilde{p}) = 0 \}$ of 
$\wt{X}$ is {\em totally geodesic}. 
The metric $\tilde{g}$ of $\wt{X}$ is expressed as 
\begin{equation}\label{model-metric}
\tilde{g} = d\tilde{x}^2 + m(\tilde{x})^{2} d\tilde{y}^2
\end{equation}
on $[0, \infty) \times \R$. 
The function $G \circ \tilde{\mu} : [0,\infty) \lra \R$ is called the 
{\em radial curvature function} of $\wt{X}$, 
where we denote by $G$ the Gaussian curvature of $\wt{X}$, 
and by $\tilde{\mu}$ any ray emanating perpendicularly from $\partial \wt{X}$ 
(Notice that such a $\tilde{\mu}$ will be called a $\partial \wt{X}$-ray). 
Remark that $m : [0, \infty) \lra \R$ satisfies the differential equation 
$m''(t) + G (\tilde{\mu}(t)) m(t) = 0$ 
with initial conditions $m(0) = 1$ and $m'(0) = 0$.  
Note that the $n$-dimensional model surfaces 
are defined similarly, and, as seen in \cite{KK}, 
we may completely classify them by taking half spaces of spaces in \cite[Theorem 1.1]{MS}.\par
Hereafter, let $(X, \partial X)$ denote a complete Riemannian 
$n$-dimensional manifold $X$ with smooth boundary $\partial X$. 
We say that $\partial X$ is {\em convex}, 
if all eigenvalues of the shape operator $A_{\xi}$ of $\partial X$ are 
non-negative in the inward vector $\xi$ normal to $\partial X$. 
Notice that our sign of $A_{\xi}$ differs from \cite{S}. 
That is, for each $p \in \partial X$ and $v \in T_{p}\partial X$, 
$A_{\xi}(v) = -\,(\nabla_{v} N)^{\top}$ holds. 
Here, we denote by $N$ a local extension of $\xi$, 
and by $\nabla$ the Riemannian connection on $X$.\par 
For a positive constant $\ell$, 
a unit speed geodesic segment $\mu : [0, \ell] \lra X$ emanating from $\partial X$ 
is called a {\em $\partial X$-segment}, if 
$d(\partial X, \mu(t)) = t$ on $[0, \ell]$. 
If $\mu : [0, \ell] \lra X$ is a $\partial X$-segment for all $\ell > 0$, 
we call $\mu$ a {\em $\partial X$-ray}. 
Here, we denote by $d(\partial X, \, \cdot \, )$ the distance function to $\partial X$ 
induced from the Riemannian structure of $X$. 
Notice that a $\partial X$-segment 
is orthogonal to $\partial X$ by the first variation formula, and so a $\partial X$-ray is too.\par 
$(X, \partial X)$ is said to have the 
{\em 
radial curvature (with respect to $\partial X$) 
bounded from below by that of $(\wt{X}, \partial \wt{X})$
} 
if, for every $\partial X$-segment $\mu : [0, \ell) \lra X$, 
the sectional curvature $K_{X}$ of $X$ satisfies
\[
K_{X}(\sigma_{t}) \ge G (\tilde{\mu}(t))
\]
for all $t \in [0, \ell)$ and all $2$-dimensional linear spaces $\sigma_{t}$ spanned by $\mu'(t)$ 
and a tangent vector to $X$ at $\mu(t)$. 
For example, if the Riemannian metric of $\wt{X}$ is 
$d\tilde{x}^{2} + d\tilde{y}^{2}$, or $d\tilde{x}^2 + \cosh^{2} (\tilde{x})\,d\tilde{y}^{2}$, then 
$G (\tilde{\mu}(t)) = 0$, or $G (\tilde{\mu}(t)) = -1$, respectively. 
Furthermore, {\bf the radial curvature may change signs wildly}. Examples of a model surfaces 
admitting such a crazy behavior of radial curvature are found in \cite[Theorems 1.3 and 4.1]{TK}.

\bigskip

Our main theorems in this article are now stated as follows:

\begin{theorem}\label{thm1.4} 
Let $(X, \partial X)$ be a complete non-compact connected Riemannian manifold $X$ 
with smooth convex boundary $\partial X$ whose radial curvature is bounded 
from below by that of a model surface $(\wt{X}, \partial \wt{X})$ 
with its metric (\ref{model-metric}). 
Assume that $X$ admits at least one $\partial X$-ray. 
\begin{enumerate}[{\rm ({ST--}1)}]
\item
If $(\wt{X}, \partial \wt{X})$ satisfies 
\[
\int_{0}^{\infty} \frac{1}{m (t)^{2}}dt = \infty,
\]
then $X$ is isometric to $[0, \infty ) \times_{m} \partial X$. 
In particular, $\partial X$ is the soul of $X$, and 
the number of connected components of $\partial X$ is one.
\item
If $(\wt{X}, \partial \wt{X})$ satisfies $\liminf_{t \to \infty} m (t) = 0$, 
then $X$ is diffeomorphic to $[0, \infty ) \times \partial X$. 
In particular, the number of connected components of $\partial X$ is one.
\end{enumerate} 
\end{theorem}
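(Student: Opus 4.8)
The plan is to reduce both statements to a single claim — \emph{every $\partial X$-segment of $X$ is the restriction of a $\partial X$-ray} — and to prove that claim by applying the comparison theorems for open triangles to configurations built from the given $\partial X$-ray. Write $f:=d(\partial X,\cdot)$, whose integral curves from $\partial X$ are the $\partial X$-segments, a $\partial X$-segment being a $\partial X$-ray exactly when it minimizes $f$ for all time. Once the claim is in hand, I would argue as follows. The inward unit normal at a boundary point is unique, so a $\partial X$-ray is determined by its foot; a $\partial X$-ray cannot pass through the cut locus of $\partial X$, since two minimal $\partial X$-segments to a point prevent either from minimizing beyond it, so distinct $\partial X$-rays are disjoint; and a forever-minimizing geodesic carries no focal point of $\partial X$, so the normal exponential map $E\colon\partial X\times[0,\infty)\to X$, $E(q,t)=\exp_{q}(t\xi)$ with $\xi$ the inward unit normal at $q$, has everywhere nonsingular differential. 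The claim makes every normal geodesic from $\partial X$ a $\partial X$-ray, so $E$ becomes a bijective local diffeomorphism, hence a diffeomorphism, and connectedness of $X$ gives connectedness of $\partial X$; this will establish (ST--2). For (ST--1) I would then write the pullback of the metric of $X$ under $E$ as $dt^{2}+h_{t}$ (no cross terms, as the $t$-lines are unit-speed geodesics orthogonal to the level sets of $f$) and show $h_{t}=m(t)^{2}h_{0}$, equivalently that the shape operator of $f^{-1}(t)$ with respect to $\nabla f$ equals $(m'(t)/m(t))\,\mathrm{id}$; this is the place where the stronger hypothesis $\int_{0}^{\infty}m(t)^{-2}\,dt=\infty$ would be used, via the equality discussion in Toponogov's comparison theorem for open triangles (the Lemma on Thin Open Triangles), which forces every open triangle of $X$ built on two $\partial X$-rays to be congruent to its model counterpart and so determines the shape operator pointwise; integrating along the rays then yields the warped-product metric and the isometry $X\cong[0,\infty)\times_{m}\partial X$. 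Since $m'(0)=0$, $\partial X$ is totally geodesic, and being a deformation retract of $X$ it is the soul.

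To prove the claim I would suppose a $\partial X$-segment $\gamma\colon[0,\ell]\to X$ is maximal, so that $p:=\gamma(\ell)$ is a cut point of $\partial X$ along $\gamma$, and let $\mu$ be the given $\partial X$-ray. Joining $p$ to $\mu(s)$ by minimal geodesics and passing to a limit as $s\to\infty$ produces a ray $\sigma$ from $p$ asymptotic to $\mu$, and then $\mu$, $\sigma$ and $\gamma$ — together with minimal segments joining their finite endpoints — form an open triangle standing on $\partial X$, to which Theorem \ref{thm4.9} (or the weaker Theorem \ref{weak}, for (ST--2)) applies. Comparing with the open triangle in the model $\wt X$ erected on a $\partial\wt X$-ray and carrying the same boundary data, and using that in $\wt X$ one has $d(\partial\wt X,\cdot)=\tilde x$ with the vertical geodesics as the only $\partial\wt X$-segments, I expect the hypothesis on $m$ to force the model triangle into a degenerate position: under $\int_{0}^{\infty}m(t)^{-2}\,dt=\infty$ the ``free'' side of the model triangle must be orthogonal to $\partial\wt X$, while under $\liminf_{t\to\infty}m(t)=0$ the family of such model triangles becomes arbitrarily thin because the base circles of $\wt X$ pinch. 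Transporting this back through the comparison should show that $\gamma$ meets $\partial X$ orthogonally and that $\gamma\ast\sigma$ is an unbroken geodesic still realizing $f$, so $\gamma$ extends past $p$, contradicting maximality; hence no maximal $\partial X$-segment exists.

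The hard part will be this last step: singling out the precise open triangle standing on $\partial X$ for which Theorem \ref{thm4.9} gives usable information at a would-be cut point, and then converting the two analytic conditions on $m$ into the correct degeneracy of the model open triangle and reading off the geometric consequence inside $X$. For (ST--1) there is the additional labour of pushing the equality case of the Lemma on Thin Open Triangles all the way down to the pointwise identification of the second fundamental forms of the level sets of $f$. By contrast, the remaining deductions — disjointness of $\partial X$-rays, the absence of focal points along them, the connectedness of $\partial X$, and the identification of $\partial X$ as the soul — should be comparatively routine once the claim is established.
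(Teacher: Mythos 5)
Your first half follows the paper's strategy (its Proposition \ref{prop6.4}: compare the triangles ${\rm OT}(\partial X,\mu(t),q)$ built on the given $\partial X$-ray and let $t\to\infty$), but the two steps you yourself flag as hard are exactly where the gaps are, and they are genuine. First, the degeneracy of the comparison triangle is not obtained by forcing the ``free side'' to be orthogonal to $\partial\wt X$; it is obtained quantitatively from the Clairaut constant $\nu_t$ of the comparison side $\wh{\gamma}_t$: Lemma \ref{lem6.3} gives $L(\wh{\gamma}_t)\ge t-d(\partial X,q)+\frac{\nu_t^{2}}{2}\int_{d(\partial X,q)}^{t}\frac{ds}{m(s)\sqrt{m(s)^{2}-\nu_t^{2}}}$, while the triangle inequality gives $L(\wh{\gamma}_t)\le d(q,\mu(t))\le t+d(q,\mu(0))$; hence $\nu_t^{2}\int m^{-2}$ is bounded, so $\nu_t\to0$ under (ST--1) (and $\nu_{t_i}\le m(t_i)\to0$ under (ST--2)), and the Clairaut relation (\ref{clairaut}) converts this into $\angle(\partial X,q,\mu(t))\to\pi$. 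Your sketch contains no estimate playing this role. Second, and more seriously, ``so $\gamma$ extends past $p$ ... still realizing $f$'' does not follow from the angle information: it shows the $\partial X$-segment through each point is \emph{unique}, but a cut point with a unique segment can still be a first focal point, where distance-minimization to $\partial X$ stops even though the geodesic extends smoothly. The paper closes exactly this hole with Lemma \ref{lem2009-03-03} (arbitrarily close to any cut point there is a cut point with at least two $\partial X$-segments, a nontrivial structural fact proved via Itoh--Tanaka), which combined with the uniqueness statement empties $\Cut(\partial X)$. Note also that in Theorem \ref{thm1.4} you may not invoke Theorem \ref{thm4.9} at all: no sector of $\wt X$ free of cut points is assumed, which is precisely why the paper proves and uses the weak version, Theorem \ref{weak}, for this theorem; and a configuration having the asymptotic ray $\sigma$ as a side is not an open triangle in the sense of the definition, so one must compare the finite triangles first and pass to the limit in the conclusions.

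For (ST--1) your rigidity mechanism is not viable as stated. The equality clause of Theorem \ref{thm4.9} is triggered only by $d(\mu_1(0),\mu_2(0))=d(\tilde{\mu}_1(0),\tilde{\mu}_2(0))$, which you have no way to arrange, and Lemma \ref{lem3.8} and Theorem \ref{weak} carry no equality discussion at all; also ``since $m'(0)=0$, $\partial X$ is totally geodesic'' conflates the model with $X$. The paper's route is an index-form/ODE argument, and something like it is needed: Lemma \ref{lem6.2} (convexity of $\partial X$, absence of focal points along a $\partial X$-ray, and (L--2) of Lemma \ref{lem6.1}) shows $\partial X$ is totally geodesic when $\int_0^\infty m^{-2}=\infty$; Theorem \ref{prop6.5} then shows via (L--1) that the radial curvature must satisfy $K_X(\sigma_t)=G(\tilde{\mu}(t))$ along every $\partial X$-ray, since strict inequality somewhere would make the solution of $f''+K_Xf=0$, $f(0)=1$, $f'(0)=0$ vanish, producing a field with $\cI_{\partial X}^{t_1}(Y,Y)=0$ and hence a focal point along a ray, a contradiction. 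Only with this pointwise curvature equality and $A\equiv0$ on $\partial X$ do the $\partial X$-Jacobi fields become $m(t)$ times parallel fields, so that $\varphi(t,q)=\exp^{\perp}(t\,v_q)$ is an isometry onto $[0,\infty)\times_m\partial X$; this is the content your phrase ``integrating along the rays'' would have to carry, and it requires the equality of curvatures, not merely the comparison inequality.
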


\medskip\noindent 
Toponogov's comparison theorem for open triangles 
in a weak form (Theorem \ref{weak}) will be applied 
in the proof of Theorem \ref{thm1.4} (see Section \ref{sec:app1}). 
The assumption on the existence of a $\partial X$-ray is very natural, 
because we may find at least one $\partial X$-ray if $\partial X$ is compact. 
If the model $\wt{X}$ is Euclidean (i.e., $m \equiv 1$), then the (ST--1) holds. 
Hence, Theorem \ref{thm1.4} extends one of Burago and Zalgaller' splitting theorems 
to a wider class of metrics than those described in \cite[Theorem 5.2.1]{BZ}, 
i.e., we mean that  they assumed that sectional curvature is {\bf non-negative everywhere}.

\begin{theorem}\label{thm}
Let $(X, \partial X)$ be a complete connected Riemannian manifold $X$ with 
disconnected smooth compact convex boundary $\partial X$ 
whose radial curvature is bounded from below by $0$. 
Then, $X$ is isometric to $[0, \ell] \times \partial X_{1}$ with Euclidean product metric of 
$[0, \ell]$ and $\partial X_{1}$, where $\partial X_{1}$ denotes a connected component 
of $\partial X$. In particular, $\partial X_{1}$ is the soul of $X$. 
\end{theorem}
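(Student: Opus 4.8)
\medskip\noindent
\emph{Proof strategy.}
The plan is to reduce to the case that $X$ is compact, then to show that the two distance functions associated with the decomposition of $\partial X$ into one component and the rest have constant sum, and finally to read off the product structure. First, if $X$ were non-compact, then, $X$ being complete and $\partial X$ compact, $d(\partial X,\cdot)$ would be unbounded; choosing $p_{i}\in X$ with $d(\partial X,p_{i})\to\infty$, taking minimal $\partial X$-segments to them, and using compactness of the unit inward normal bundle of $\partial X$ to extract a limit, one obtains a $\partial X$-ray. Since the radial curvature of $X$ is bounded below by that of the flat model $\wt{X}$ (the one with $m\equiv1$), for which $\int_{0}^{\infty}m(t)^{-2}\,dt=\infty$, Theorem \ref{thm1.4}~(ST--1) applies and forces $\partial X$ to be connected, a contradiction. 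Hence $X$ is compact.

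Fix a component $\partial X_{1}$ of $\partial X$, let $\partial X_{2}$ be the (nonempty, compact) union of the remaining components, and set $\rho_{i}:=d(\partial X_{i},\cdot)$ and $\ell:=d(\partial X_{1},\partial X_{2})>0$. Let $\gamma:[0,\ell]\to X$ be a minimal geodesic from $\partial X_{1}$ to $\partial X_{2}$; by the first variation formula it meets $\partial X$ orthogonally at both ends. Since $d(\partial X_{1},\gamma(t))=t$ and $d(\partial X_{2},\gamma(t))=\ell-t$, the curve $\gamma|_{[0,\ell/2]}$ and the reverse of $\gamma|_{[\ell/2,\ell]}$ are $\partial X$-segments, so the radial curvature hypothesis is in force along $\gamma$; moreover the triangle inequality gives $\rho_{1}+\rho_{2}\ge\ell$ on $X$, with equality along $\gamma$. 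The crucial claim is that in fact $\rho_{1}+\rho_{2}\equiv\ell$ on all of $X$.

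To prove this I would apply Toponogov's comparison theorem for open triangles (Theorem \ref{thm4.9}) with the flat model $\wt{X}$ ($m\equiv1$) as reference surface, to the open triangles built from $\gamma$ together with the $\partial X$-segments issuing from $\partial X_{1}$ (resp.\ from $\partial X_{2}$); since the comparison triangles are then ordinary Euclidean triangles, this should yield that $\rho_{1}$ and $\rho_{2}$ are concave on $X$ (in the barrier sense across their cut loci). Being a sum of concave functions, $\rho_{1}+\rho_{2}$ is concave; it is $\ge\ell$ and attains the value $\ell$ at the interior points $\gamma(t)$, $0<t<\ell$, so the maximum principle for concave functions forces $\rho_{1}+\rho_{2}\equiv\ell$. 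I expect this passage from the pointwise radial-curvature bound to the global concavity of the $\rho_{i}$ to be the main obstacle; everything else is formal. (One could instead try to work on the Alexandrov double of $X$ along $\partial X$, which has nonnegative curvature, but the intended route is through Theorem \ref{thm4.9}.)

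Granting $\rho_{1}+\rho_{2}\equiv\ell$: then $\rho_{1}=\ell-\rho_{2}$ is simultaneously concave and convex, hence affine, so it is smooth with $\mathrm{Hess}\,\rho_{1}\equiv0$, and $\nabla\rho_{1}$ is a parallel unit vector field on $X$. Its flow $\{\phi_{s}\}$ consists of isometries with $\rho_{1}\circ\phi_{s}=\rho_{1}+s$, and since $\max_{X}\rho_{1}=\ell$ the map $[0,\ell]\times\partial X_{1}\to X$, $(s,p)\mapsto\phi_{s}(p)$, is a diffeomorphism pulling the metric back to the Euclidean product $ds^{2}+g_{\partial X_{1}}$. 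As $\rho_{1}^{-1}(\ell)=\phi_{\ell}(\partial X_{1})$ is connected and contained in $\partial X$, it equals $\partial X_{2}$, so $\partial X$ has exactly the two components $\partial X_{1}$ and $\partial X_{2}$; and $\partial X_{1}=\rho_{1}^{-1}(0)$ is totally geodesic (its second fundamental form is $\mathrm{Hess}\,\rho_{1}|_{T\partial X_{1}}=0$), with $X$ retracting onto it along the flow, so $\partial X_{1}$ is the soul of $X$, as in \cite{BZ}.
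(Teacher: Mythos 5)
Your reduction to the compact case via Theorem \ref{thm1.4} is legitimate (and not circular, since Theorem \ref{thm1.4} is proved independently), and your final step from an affine function $\rho_{1}$ to the product splitting is standard; but the step you yourself flag as the main obstacle --- deducing global concavity of $\rho_{1}=d(\partial X_{1},\cdot)$ and $\rho_{2}=d(\partial X_{2},\cdot)$ from Theorem \ref{thm4.9} --- is a genuine gap, and it is exactly the point the paper has to work around. Theorem \ref{thm4.9} (and Theorem \ref{weak}) compares only open triangles whose two sides are $\partial X$-segments, i.e.\ minimizers of the distance to the \emph{whole} boundary, because the radial-curvature hypothesis bounds $K_{X}$ only on planes containing $\mu'(t)$ along such segments. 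Your midpoint--angle argument therefore controls $\rho_{1}$ only at points where a $\partial X_{1}$-minimizer is still a $\partial X$-segment, i.e.\ on the region $\{\rho_{1}\le\rho_{2}\}$, up to $\Cut(\partial X)$; past the equidistant set a $\partial X_{1}$-minimizer is no longer a $\partial X$-segment and the hypothesis says nothing about the curvature along it. At best the comparison theorem yields concavity-type control of $d(\partial X,\cdot)=\min(\rho_{1},\rho_{2})$, which does not produce the affine function you need. Nor can you repair this by invoking the classical concavity of the distance to a convex boundary (Burago--Zalgaller) or by passing to the Alexandrov double: nonnegative radial curvature does not imply nonnegative sectional curvature (cf.\ \cite[Example 5.6]{KT1}), and that extra generality is precisely what the theorem claims. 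So $\rho_{1}+\rho_{2}$ is not known to be concave where you need it, and the maximum-principle step has nothing to stand on.

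The missing idea, as the paper implements it, is to make the cut locus itself usable as a second ``boundary''. One first shows (Lemmas \ref{lem8.1} and \ref{lem8.2}), by an open--closed argument applying Theorem \ref{thm4.9} in the flat half-plane $\R^{2}_{+}$, that every point of $X$ lies on a minimal segment of length $\ell$ from $\partial X_{1}$ to $\partial X_{2}$; hence $\Cut(\partial X)$ coincides with the equidistant set $\{d(\partial X,\cdot)=\ell/2\}$ and all angles there are $\pi/2$ (Remark \ref{rem8.3}). One then proves that $\Cut(\partial X)$ is totally geodesic (Lemma \ref{lem8.4}), observes that its radial directions are parallel to those of $\partial X$ so that it again has nonnegative radial curvature, and applies Theorem \ref{thm4.9} a second time with $\Cut(\partial X)$ in place of $\partial X$; the two comparisons trap the angles at each level and show every level set $H_{i}(t)$ is totally geodesic (Lemma \ref{lem8.5}), after which $\Phi(t,p)=\exp^{\perp}(t\,v_{p})$ is an isometry by the first variation formula. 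If you wish to keep your concavity scheme, you would first have to establish the structure of Lemma \ref{lem8.2} to control $\rho_{1}$ beyond the equidistant set --- at which point you are essentially reproducing the paper's argument.
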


\medskip\noindent
Toponogov's comparison theorem for open triangles (Theorem \ref{thm4.9}) will be applied 
in the proof of Theorem \ref{thm} (see Section \ref{sec:app2}). Notice that non-negative radial curvature {\bf does not always mean} 
non-negative sectional curvature (cf.\,\cite[Example 5.6]{KT1}). 
Although Theorem \ref{thm} extends one of Burago and Zalgaller' splitting theorems 
to a wider class of metrics than those described in \cite[Theorem 5.2.1]{BZ}, 
Ichida \cite{I} and Kasue \cite{K} obtain the same conclusion of the theorem 
under weaker assumptions, i.e., the mean curvature 
(with respect to the inner normal direction) of boundary are non-negative, 
and that Ricci curvature is non-negative everywhere.

\medskip

In the following sections, all geodesics will be normalized, unless otherwise stated.  

\section{Toponogov's Theorems for Open Triangles}\label{sec:sch1}
Throughout this section, let $(X, \partial X)$ denote a complete connected Riemannian 
manifold $X$ with smooth {\bf convex} boundary $\partial X$ 
whose radial curvature is bounded from below by that of a model surface $(\wt{X}, \partial \wt{X})$ 
with its metric (\ref{model-metric}). 

\begin{definition}{\bf (Open Triangles)} 
For any fixed two points $p, q \in X \setminus \partial X$, an {\em open triangle} 
\[
{\rm OT}(\partial X, p, q) = (\partial X, p, q\,;\gamma, \mu_{1}, \mu_{2})
\]
in $X$ is defined by two $\partial X$-segments $\mu_{i} : [0, \ell_{i}] \lra X$, $i = 1, 2$, 
a minimal geodesic segment $\gamma : [0, d(p, q)] \lra X$, and $\partial X$ such that
$\mu_{1}(\ell_{1}) = \gamma (0) = p$, $\mu_{2}(\ell_{2}) = \gamma (d(p, q)) = q$.
\end{definition}

\begin{remark}
In this article, whenever an open triangle 
${\rm OT}(\partial X, p, q) = (\partial X, p, q\,;\gamma, \mu_{1}, \mu_{2})$ 
in $X$ is given, $(\partial X, p, q\,;\gamma, \mu_{1}, \mu_{2})$, as a symbol, 
always means that the minimal geodesic segment 
$\gamma$ is the opposite side to $\partial X$ emanating from $p$ to $q$, and that 
the $\partial X$-segments $\mu_{1}, \mu_{2}$ are sides emanating from $\partial X$ to $p$, $q$, 
respectively.
\end{remark}

\begin{definition}
We call the set $\wt{X} (\theta) : = \tilde{y}^{-1} ((0, \theta))$ a sector in $\wt{X}$ 
for each constant number $\theta >0$. 
\end{definition}

\begin{remark}
Since a map $(\tilde{p}, \tilde{q}) \lra (\tilde{p}, \tilde{q} + c)$, $c \in \R$, over $\wt{X}$ is an isometry, 
a sector $\wt{X} (\theta)$ is isometric to $\tilde{y}^{-1} (c, c + \theta)$ for all $c \in \R$. 
\end{remark}

\bigskip

Toponogov's comparison theorem for open triangles is stated as follows:

\begin{theorem}{\rm (\cite[Theorem 8.4]{KT2})}\label{thm4.9} Let $(X, \partial X)$ be a complete connected Riemannian manifold $X$ with smooth convex boundary $\partial X$ 
whose radial curvature is bounded from below by that of 
a model surface $(\wt{X}, \partial \wt{X})$ with its metric (\ref{model-metric}). 
Assume that 
$\wt{X}$ admits a sector $\wt{X}(\theta_{0})$ which has no pair of cut points. 
Then, for every open triangle ${\rm OT}(\partial X, p, q) = (\partial X, p, q\,;\,\gamma, \mu_{1}, \mu_{2})$ in $X$ with $d (\mu_{1}(0), \mu_{2}(0)) < \theta_{0}$,
there exists an open triangle 
${\rm OT}(\partial \wt{X}, \tilde{p}, \tilde{q}) = (\partial \wt{X}, \tilde{p}, \tilde{q}\,;\,\tilde{\gamma}, \tilde{\mu}_{1}, \tilde{\mu}_{2})$ in $\wt{X}(\theta_{0})$ 
such that
\begin{equation}\label{thm4.9-length}
d(\partial \wt{X},\tilde{p}) = d(\partial X, p), \quad 
d(\tilde{p},\tilde{q}) = d(p, q), \quad 
d(\partial \wt{X},\tilde{q}) = d(\partial X, q)
\end{equation}
and that
\begin{equation}\label{thm4.9-angle}
\angle\,p \ge \angle\,\tilde{p}, \quad  
\angle\,q \ge \angle\,\tilde{q}, \quad 
d (\mu_{1}(0), \mu_{2}(0)) \ge d (\tilde{\mu}_{1}(0), \tilde{\mu}_{2}(0)).
\end{equation}
Furthermore, if 
$d (\mu_{1}(0), \mu_{2}(0)) = d (\tilde{\mu}_{1}(0), \tilde{\mu}_{2}(0))$ holds, then 
\[
\angle\,p = \angle\,\tilde{p}, \quad  \angle\,q = \angle\,\tilde{q}
\]
hold. Here $\angle\,p$ denotes the angle between two vectors $\gamma'(0)$ and 
$-\,\mu_{1}'(d(\partial X, p))$ in $T_{p}X$.
\end{theorem}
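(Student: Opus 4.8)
\medskip\noindent\textit{Sketch of proof.}\ The plan is to reduce the assertion to the case of a ``thin'' open triangle, for which a direct Jacobi-field comparison is available, and then to recover the general case by subdividing the opposite side $\gamma$ and combining the resulting model pieces inside the sector $\wt{X}(\theta_{0})$.

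The analytic core is the thin case. Assume the triangle is so narrow that $\gamma$, together with the family of $\partial X$-segments running from $\partial X$ to the points $\gamma(s)$, sweeps out an embedded strip, and put $L(s) := d(\partial X, \gamma(s))$, so that $L(0) = d(\partial X, p)$ and $L(d(p,q)) = d(\partial X, q)$. Along the $\partial X$-segment to $\gamma(s)$, the vector field obtained by varying $s$ is a Jacobi field whose value and covariant derivative at its foot on $\partial X$ are linked through the shape operator $A_{\xi}$ of $\partial X$, instead of vanishing as at a fixed interior point. Inserting this into the second variation of arclength, the interior part of the resulting index form is dominated --- via the Rauch comparison theorem and $K_{X}\ge G(\tilde{\mu})$ --- by the model index form attached to $m'' + (G\circ\tilde{\mu})\,m = 0$, while convexity of $\partial X$ (i.e.\ $A_{\xi}\ge 0$; note $\partial\wt{X}$ is totally geodesic, so its analogous boundary term vanishes) enters with the favourable sign. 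A Sturm-type comparison of $L$ with the width function of the model open triangle carrying the same three lengths --- which exists, is unique, and stays inside $\wt{X}(\theta_{0})$ precisely because that sector has no pair of cut points --- then yields $\angle\,p\ge\angle\,\tilde{p}$, $\angle\,q\ge\angle\,\tilde{q}$ and $d(\mu_{1}(0),\mu_{2}(0))\ge d(\tilde{\mu}_{1}(0),\tilde{\mu}_{2}(0))$, with equality in the last forcing the strip to be isometric to its model image.

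For a general open triangle, pick $0 = t_{0} < t_{1} < \cdots < t_{N} = d(p,q)$ with mesh so fine that each sub-triangle $\mathrm{OT}(\partial X, \gamma(t_{i-1}), \gamma(t_{i}))$ --- formed with a $\partial X$-segment $\nu_{i}$ to $\gamma(t_{i})$, where $\nu_{0} = \mu_{1}$ and $\nu_{N} = \mu_{2}$ --- is thin; this is possible since the feet $\nu_{i-1}(0), \nu_{i}(0)$ coalesce as $t_{i-1}\to t_{i}$. The thin case produces model sub-triangles $\tilde{T}_{i}$ inside $\wt{X}(\theta_{0})$ with matching lengths, with both end-angles no larger than the corresponding angles of $\mathrm{OT}(\partial X, \gamma(t_{i-1}), \gamma(t_{i}))$, and with foot distance no larger than $d(\nu_{i-1}(0),\nu_{i}(0))$. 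Since $\gamma(t_{i-1}), \gamma(t_{i}), \gamma(t_{i+1})$ lie on one geodesic, the two real angles at $\gamma(t_{i})$ sum to $\pi$, hence the two model angles there sum to at most $\pi$; this is exactly the condition allowing $\tilde{T}_{i}$ and $\tilde{T}_{i+1}$ to be merged across their common $\partial\wt{X}$-segment, via an Alexandrov-type lemma inside $\wt{X}(\theta_{0})$, into a single model open triangle whose base is the sum of the two bases, whose remaining two angles do not exceed those of $\tilde{T}_{i}$ and $\tilde{T}_{i+1}$ at those vertices, and whose foot distance is controlled by the sum of the two foot distances. Iterating yields a model open triangle with base $\sum (t_{i}-t_{i-1}) = d(p,q)$, with distances to $\partial\wt{X}$ at its vertices equal to $d(\partial X, p)$ and $d(\partial X, q)$, with vertex angles at most $\angle\,p$ and $\angle\,q$, and with foot distance at most $d(\mu_{1}(0),\mu_{2}(0))$; since the sector has no pair of cut points this is the unique model open triangle with those three lengths, hence it is the required $\mathrm{OT}(\partial\wt{X}, \tilde{p}, \tilde{q})$ and the inequalities follow. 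For the rigidity clause, the equality $d(\mu_{1}(0),\mu_{2}(0)) = d(\tilde{\mu}_{1}(0),\tilde{\mu}_{2}(0))$ forces equality in every sub-triangle and triviality of every merge, which by the thin-case rigidity makes the swept strip isometric to its model image, so $\angle\,p = \angle\,\tilde{p}$ and $\angle\,q = \angle\,\tilde{q}$.

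The step I expect to be the main obstacle is the combination inside the model. One must check that the Alexandrov-type lemma survives even though $G\circ\tilde{\mu}$ may change sign wildly, that every intermediate broken configuration and its straightening stays inside the single sector $\wt{X}(\theta_{0})$ in which comparison is legitimate, that the feet remain monotonically ordered along $\partial\wt{X}$ so that their distances add, and that the merged figure really does realize the prescribed three lengths. This bookkeeping --- rather than the fairly routine Rauch estimate of the thin case --- is where the hypothesis that $\wt{X}(\theta_{0})$ has no pair of cut points does the essential work, and it is the delicate part of the proof.
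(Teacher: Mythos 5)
You should first be aware that this paper does not actually prove Theorem \ref{thm4.9}: it is imported verbatim from \cite[Theorem 8.4]{KT2}, and the only comparison result proved here is the weaker Theorem \ref{weak}. Your overall strategy (a thin-triangle comparison via index forms, then subdivision of $\gamma$ and gluing of model pieces) is indeed the strategy behind \cite{KT2} and behind the proof of Theorem \ref{weak} in Section 2, so the outline is not wrong. But the step you yourself flag as ``the main obstacle'' is not a bookkeeping detail; it is the actual mathematical content of the theorem, and your sketch supplies no argument for it. Concretely: the ``Alexandrov-type lemma'' that merges $\tilde{T}_{i}$ and $\tilde{T}_{i+1}$ across a common $\partial\wt{X}$-segment into a single open triangle with the \emph{same} three lengths, non-increased angles and non-increased foot distance is a statement about a model whose radial curvature $G\circ\tilde{\mu}$ may change sign arbitrarily; it is false in general models and is exactly what the hypothesis that $\wt{X}(\theta_{0})$ contains no pair of cut points is needed to rescue. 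Likewise, the existence and uniqueness of a model open triangle in $\wt{X}(\theta_{0})$ realizing three prescribed lengths is itself a nontrivial result there, not a consequence of the no-cut-point hypothesis that can simply be invoked. The present paper's proof of Theorem \ref{weak} shows precisely what one gets without that work: after gluing, the inequality (\ref{weak-6}) only gives a locally convex domain $\cD$, and one must settle for a shortest arc $\wh{\gamma}$ in its closure, i.e.\ a \emph{generalized} open triangle with the length \emph{inequalities} (\ref{weak-length2}) rather than the equalities (\ref{thm4.9-length}). Passing from that broken configuration to a genuine comparison triangle with $d(\tilde{p},\tilde{q})=d(p,q)$, and keeping every intermediate configuration inside the sector, is the hard part of \cite{KT2}, and your proposal assumes it rather than proves it.

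A second, smaller gap: your ``thin case'' claims more than the available lemma provides. Lemma \ref{lem3.8} (= \cite[Lemma 5.8]{KT2}) gives the three length equalities and the angle inequalities at $p$ and $q$, but no comparison of the foot distance $d(\mu_{1}(0),\mu_{2}(0))$ with $d(\tilde{\mu}_{1}(0),\tilde{\mu}_{2}(0))$, and no rigidity statement; your induction uses both (foot distances ``adding'' under merges, and equality forcing isometry of the swept strip), so these would have to be established separately for thin triangles and then shown to survive the merging step --- again the part of the argument that is missing. As written, the proposal is an accurate road map of the known proof but has a genuine gap at its decisive step.
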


\begin{remark}
In Theorem \ref{thm4.9}, we do not assume that $\partial X$ is connected. Moreover, 
the opposite side $\gamma$ of ${\rm OT}(\partial X, p, q)$ does not meet 
$\partial X$ (see \cite[Lemma 6.1]{KT2}). 
In \cite{MS}, they treat a pair $(M, N)$ of a complete connected Riemannian manifold $M$ 
and a compact connected totally geodesic hypersurface $N$ of $M$ such that 
the radial curvature with respect to $N$ is bounded from below by that of 
the model $((a, b) \times_{m} N, N)$, where $(a, b)$ denotes an interval, in their sense. 
Note that the radial curvature with respect to $N$ is bounded from below by that of our model 
$([0, \infty), d\tilde{x}^2) \times_{m} (\R, d\tilde{y}^2)$, 
if it is bounded from below by that of their model $((a, b) \times_{m} N, N)$. 
Thus, Theorem \ref{thm4.9} is {\bf applicable to} the pair $(M, N)$.
\end{remark}

\bigskip

In the following, we will prove the Toponogov comparison theorem for open triangles 
in a weak form (Theorem \ref{weak}), where we do not demand any assumption on a sector. To do so, 
we need to introduce definitions and a key lemma:

\begin{definition}{\bf (Generalized open triangles)} 
A generalized open triangle 
\[
{\rm GOT}(\partial \wt{X}, \wh{p}, \wh{q}\,) = (\partial \wt{X}, \wh{p}, \wh{q}\,;\,\wh{\gamma}, \wh{\mu}_{1}, \wh{\mu}_{2})
\]
in $\wt{X}$ is defined by two $\partial \wt{X}$-segments $\wh{\mu}_{i} : [0, \ell_{i}] \lra \wt{X}$, $i = 1, 2$, 
and a geodesic segment $\wh{\gamma}$ emanating from $\wh{p}$ to $\wh{q}$ such that 
$\wh{\mu}_{1}(\ell_{1}) = \wh{\gamma} (0) = \wh{p}$, 
$\wh{\mu}_{2}(\ell_{2}) = \wh{\gamma} (d(\,\wh{p},\, \wh{q}\,)) = \wh{q}$, 
and that $\wh{\gamma}$ is a shortest arc joining $\wh{p}$ to $\wh{q}$ in the compact domain 
bounded by $\wh{\mu}_{1}$, $\wh{\mu}_{2}$, and $\wh{\gamma}$.
\end{definition}

\begin{definition}{\bf (The injectivity radius)} 
The {\em injectivity radius} $\inj (\tilde{p})$ of a point $\tilde{p} \in \wt{X}$ 
is the supremum of $r > 0$ such that, for any point $\tilde{q} \in \wt{X}$ 
with $d(\tilde{p}, \tilde{q}) < r$, 
there exists a unique minimal geodesic segment joining $\tilde{p}$ to $\tilde{q}$. 
\end{definition}

\begin{remark}
For each point $\tilde{p} \in \wt{X} \setminus \partial \wt{X}$,  
$\inj(\tilde{p}) > d(\partial \wt{X}, \tilde{p})$ holds, 
if $\tilde{p}$ is sufficiently close to $\partial \wt{X}$.
\end{remark}

\begin{definition}\label{def3.3}{\bf (Thin Open Triangle)}
An open triangle ${\rm OT}(\partial X, p, q)$ in $X$ is called a 
{\em thin open triangle}, if 
\begin{enumerate}[{\rm ({TOT--}1)}]
\item
the opposite side $\gamma$ of ${\rm OT}(\partial X, p, q)$ to $\partial X$ emanating from $p$ 
to $q$ is contained in a normal convex neighborhood in $X \setminus \partial X$, and
\item
$L(\gamma) < \inj (\tilde{q}_{s})$ for all $s \in [0, d(p, q)]$,
\end{enumerate}
where $L(\gamma)$ denotes the length of $\gamma$, 
and $\tilde{q}_{s}$ denotes a point in $\wt{X}$ with 
$d(\partial \wt{X}, \tilde{q}_{s}) = d(\partial X, \gamma (s))$ for each $s \in [0, d(p, q)]$. 
\end{definition}

\medskip

Then, we have the key lemma to prove the weaker version of 
Toponogov's comparison theorem for open triangles. 

\begin{lemma}{\rm (\cite[Lemma 5.8]{KT2})}\label{lem3.8} 
For every thin open triangle ${\rm OT}(\partial X, p, q)$ in $X$, 
there exists an open triangle 
${\rm OT}(\partial \wt{X}, \tilde{p}, \tilde{q})$ in $\wt{X}$ such that
\begin{equation}\label{lem3.8-length}
d(\partial \wt{X},\tilde{p}) = d(\partial X, p), \quad 
d(\tilde{p},\tilde{q}) = d(p, q), \quad 
d(\partial \wt{X},\tilde{q}) = d(\partial X, q)
\end{equation}
and that
\begin{equation}\label{lem3.8-angle}
\angle\,p \ge \angle\,\tilde{p}, \quad  
\angle\,q \ge \angle\,\tilde{q}.
\end{equation}
\end{lemma}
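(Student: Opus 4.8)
The plan is to construct the comparison open triangle in $\wt{X}$ by a geodesic--shooting (continuation) argument and to extract the angle inequalities (\ref{lem3.8-angle}) from a Riccati comparison for the distance function to $\partial X$ along the opposite side. Write $\gamma:[0,L]\lra X\setminus\partial X$ for the opposite side of ${\rm OT}(\partial X,p,q)$, parametrised by arc length with $\gamma(0)=p$, $\gamma(L)=q$, $L:=L(\gamma)=d(p,q)$; put $r:=d(\partial X,\,\cdot\,)$ and $\rho:=r\circ\gamma$, so that $\rho$ is $1$-Lipschitz, $\rho(0)=d(\partial X,p)=:a$, $\rho(L)=d(\partial X,q)=:c$, and $|a-c|\le L$. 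The whole problem is then reduced to comparing $\rho$ with the analogous function $\tilde x:=d(\partial\wt{X},\,\cdot\,)\circ\tilde\gamma$ of a suitable model geodesic $\tilde\gamma$.

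\emph{Step 1 (differential inequality for $\rho$).} Since $\partial X$ is convex, its shape operator in the inward normal is $\ge 0$; feeding this initial condition and the radial curvature bound $K_{X}\ge G(\tilde\mu)$ into the Riccati equation for the shape operators of the tubes $\{r=\mathrm{const}\}$ yields, at points of $X\setminus\partial X$ not focal to $\partial X$, the Hessian estimate $\mathrm{Hess}\,r\le \tfrac{m'(r)}{m(r)}\,(g-dr\otimes dr)$. Restricting to the geodesic $\gamma$ gives $\rho''\le \tfrac{m'(\rho)}{m(\rho)}\bigl(1-(\rho')^{2}\bigr)$ wherever $\rho$ is twice differentiable, and everywhere in the barrier sense, because $r$ is semiconcave off $\partial X$ (locally a minimum of smooth distance functions with locally bounded Hessians, using (TOT--1)). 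By the first variation formula $\rho'(0)=-\cos\angle\,p$ and $\rho'(L)=\cos\angle\,q$ (with the favourable one-sided inequalities at cut points of $\partial X$). On the model side, for any unit speed geodesic $\tilde\gamma$ in $\wt{X}$ disjoint from $\partial\wt{X}$ the function $\tilde x$ satisfies the \emph{same relation with equality}, $\tilde x''=\tfrac{m'(\tilde x)}{m(\tilde x)}\bigl(1-(\tilde x')^{2}\bigr)$ -- the Clairaut identity for the metric (\ref{model-metric}).

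\emph{Step 2 (model triangle and angle comparison).} For $t\in[0,L]$ the sub-triangle ${\rm OT}(\partial X,p,\gamma(t))$, with opposite side $\gamma|_{[0,t]}$, is again thin by (TOT--1) and (TOT--2). Build the comparison triangles ${\rm OT}(\partial\wt{X},\tilde p,\tilde q_{t})$ with a common initial vertex $\tilde p$ (fixed, $d(\partial\wt{X},\tilde p)=a$) by continuation in $t$: for small $t$ everything lies in a totally normal neighbourhood of $\tilde p$ and the triangle exists trivially, while (TOT--2) keeps the length of the model opposite side below $\inj(\tilde q_{s})$ at all relevant radial distances, so the model opposite side stays minimal and the family extends to $t=L$, producing ${\rm OT}(\partial\wt{X},\tilde p,\tilde q)$ with (\ref{lem3.8-length}). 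Its opposite side $\tilde\gamma$ is minimal, hence disjoint from $\partial\wt{X}$, so $\tilde x$ obeys the model equality of Step 1 with the same endpoint values $a$ and $c$ as $\rho$. A Sturm-type comparison of the two (the manifold function satisfying $\le$, the model one equality, and $\tilde\gamma$ minimal) forces $\rho\ge\tilde x$ on $[0,L]$; since $\rho$ and $\tilde x$ agree at $s=0,L$ this gives $\rho'(0)\ge\tilde x'(0)$ and $\rho'(L)\le\tilde x'(L)$, i.e.\ $\cos\angle\,p\le\cos\angle\,\tilde p$ and $\cos\angle\,q\le\cos\angle\,\tilde q$, which gives (\ref{lem3.8-angle}).

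\emph{Main obstacle.} The technical heart is twofold. First, one must make the model construction of Step 2 rigorous -- in particular, check that the injectivity-radius bounds furnished by (TOT--2) apply to every model point that appears along the continuation, so that the model opposite side is genuinely a minimal geodesic segment and ${\rm OT}(\partial\wt{X},\tilde p,\tilde q)$ is a bona fide open triangle. Second, and more delicate, is the comparison $\rho\ge\tilde x$: because $m'/m$ may change sign this is not a bare maximum principle, and one must run it as a Riccati/Sturm argument that exploits the minimality of $\tilde\gamma$ (equivalently, the absence of conjugate points along it, supplied by (TOT--2)); the interior cut points of $\partial X$ along $\gamma$ enter here through the barrier form of the inequality for $\rho''$, which is handled in the standard way. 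I expect this Sturm-type comparison, together with the bookkeeping for the shooting/continuation, to be where essentially all the work lies.
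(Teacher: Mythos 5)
This paper does not actually prove Lemma \ref{lem3.8}: it is imported from \cite[Lemma 5.8]{KT2} and used as a black box, so your proposal can only be judged on its own terms. The first half of your plan is sound: convexity of $\partial X$ and the radial bound give $\mathrm{Hess}\,r\le \frac{m'(r)}{m(r)}\,(g-dr\otimes dr)$ along $\partial X$-segments, smoothly off $\Cut(\partial X)$ and in the barrier sense at cut points, hence $\rho''\le\frac{m'(\rho)}{m(\rho)}(1-\rho'^{2})$ for $\rho=d(\partial X,\gamma(\cdot))$; the endpoint relations $\rho'(0^{+})\le-\cos\angle\,p$ and $\rho'(L^{-})\ge\cos\angle\,q$ hold in the favourable one-sided sense; and a model triangle realizing (\ref{lem3.8-length}) exists simply by completeness and an intermediate-value argument in the $\tilde y$-displacement (no shooting/continuation is needed for existence). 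The lemma would indeed follow from $\rho\ge\tilde x$ on $[0,L]$.

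The gap is that $\rho\ge\tilde x$ is essentially the whole content of the lemma, and the mechanism you offer for it --- a Sturm/Riccati comparison ``exploiting the minimality of $\wh{\gamma}$ (absence of conjugate points)'' --- cannot work as stated. After the natural substitution $u=\int_{0}^{\rho}m$, $\tilde u=\int_{0}^{\tilde x}m$, one gets $u''\le m'(\rho)$ and $\tilde u''=m'(\tilde x)$, so $v=u-\tilde u$ only satisfies $v''\le c(s)v$ with $c(s)$ built from $m''$ at unknown intermediate heights between $\rho(s)$ and $\tilde x(s)$, not along $\wh{\gamma}$; hence no conjugate-point condition along $\wh{\gamma}$ yields a maximum principle once $m'$, $m''$ change sign (in constant curvature the substitution linearizes the problem, which is why the classical argument succeeds). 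Worse, minimality alone is demonstrably insufficient: take $X=\wt{X}$ with $m\equiv 1$ near $0$ and a symmetric bump of $m$ at some height $x_{0}$; for suitable endpoints at height $x_{0}$ there are two distinct minimal geodesics of equal length, one with height profile above $x_{0}$ and one below, both free of conjugate points, and whichever is chosen as the comparison side, the other one (taken as $\gamma$, where the differential relation holds with equality) violates $\rho\ge\tilde x$. Such configurations are excluded precisely by (TOT--2) (two minimizers force $d(\tilde p,\tilde q)\ge\inj(\tilde p)$, while (TOT--2) gives $L(\gamma)<\inj(\tilde p)$), so any correct proof must feed (TOT--2) quantitatively into the comparison step itself; your sketch invokes it only for ``no conjugate points'' (which minimality already gives) and for continuation bookkeeping that moreover tacitly assumes the model side visits only heights of the form $\rho(s)$ --- an a priori bound of exactly the kind being proved. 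This missing step is where essentially all of the work behind \cite[Lemma 5.8]{KT2} lies, so as it stands the proposal does not establish the lemma.
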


\medskip

Now, the weaker version of 
Toponogov's comparison theorem for open triangles is stated as follows:

\begin{theorem}\label{weak}
Let $(X, \partial X)$ be a complete connected Riemannian 
manifold $X$ with smooth convex boundary $\partial X$ 
whose radial curvature is bounded from below by that of 
a model surface $(\wt{X}, \partial \wt{X})$. 
Then, for every open triangle ${\rm OT}(\partial X, p, q) = (\partial X, p, q\,;\,\gamma, \mu_{1}, \mu_{2})$ in $X$, 
there exists a generalized open triangle 
${\rm GOT}(\partial \wt{X}, \wh{p}, \wh{q}\,) = (\partial \wt{X}, \wh{p}, \wh{q}\,;\,\wh{\gamma}, \wh{\mu}_{1}, \wh{\mu}_{2})$ 
in $\wt{X}$ such that
\begin{equation}\label{weak-length1}
d(\partial \wt{X},\wh{p}\,) = d(\partial X, p), \quad 
d(\partial \wt{X},\wh{q}\,) = d(\partial X, q), 
\end{equation}
and
\begin{equation}\label{weak-length2}
d(\partial X, q) - d(\partial X, p) \le d(\,\wh{p},\,\wh{q}\,) \le L(\,\wh{\gamma}\,) \le d(p, q), \quad 
\end{equation}
and that
\begin{equation}\label{weak-angle}
\angle\,p \ge \angle\,\wh{p}, \quad  
\angle\,q \ge \angle\,\wh{q}.
\end{equation}
Here $L(\,\wh{\gamma}\,)$ denotes the length of $\wh{\gamma}$.
\end{theorem}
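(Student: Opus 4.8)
The plan is to prove Theorem~\ref{weak} by a subdivision-and-concatenation argument, building the desired generalized open triangle in $\wt X$ out of finitely many thin open triangles supplied by Lemma~\ref{lem3.8}. First I would choose a partition $0 = s_{0} < s_{1} < \dots < s_{N} = d(p,q)$ of the parameter interval of the opposite side $\gamma$ so fine that, writing $p_{j} := \gamma(s_{j})$, each sub-triangle ${\rm OT}(\partial X, p_{j-1}, p_{j})$ with opposite side $\gamma|_{[s_{j-1},s_{j}]}$ is a \emph{thin} open triangle in the sense of Definition~\ref{def3.3}. This is possible because (TOT--1) is a local condition along the compact curve $\gamma$ (it lies in $X \setminus \partial X$ by the Remark after Theorem~\ref{thm4.9}), and (TOT--2) holds once the mesh is small, since $L(\gamma|_{[s_{j-1},s_{j}]}) \to 0$ while $\inf_{s} \inj(\tilde q_{s}) > 0$ on the compact set $\{\tilde q_{s}\}$. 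Applying Lemma~\ref{lem3.8} to each piece yields open triangles ${\rm OT}(\partial\wt X, \tilde p_{j-1}^{\,(j)}, \tilde p_{j}^{\,(j)})$ in $\wt X$ with matching radial distances $d(\partial\wt X, \tilde p_{j}) = d(\partial X, p_{j})$ and with $d(\tilde p_{j-1}, \tilde p_{j}) = d(p_{j-1}, p_{j})$ and angle inequalities at the endpoints.

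Next I would glue these pieces together inside $\wt X$. Using the rotational isometry $(\tilde x,\tilde y) \mapsto (\tilde x, \tilde y + c)$ of $\wt X$, translate the $j$-th comparison triangle so that its initial vertex is identified with the terminal vertex of the $(j-1)$-th one; since both vertices sit at the same radial distance from $\partial\wt X$, such a $\tilde y$-translation exists, and after reflecting if necessary we may assume all pieces lie on the same side. This produces a broken geodesic $\wh\gamma_{0}$ in $\wt X$ from a point $\wh p$ with $d(\partial\wt X,\wh p) = d(\partial X,p)$ to a point $\wh q$ with $d(\partial\wt X,\wh q) = d(\partial X,q)$, together with the two $\partial\wt X$-segments $\wh\mu_{1}, \wh\mu_{2}$ at its ends. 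By construction the total length of $\wh\gamma_{0}$ equals $\sum_{j} d(p_{j-1},p_{j}) \le d(p,q)$ (with equality only if no subdivision was needed), and the angles of the broken path at $\wh p$ and $\wh q$ are bounded above by $\angle\,p$ and $\angle\,q$ respectively, giving \eqref{weak-length1} and the outer inequalities of \eqref{weak-length2}--\eqref{weak-angle}. Replacing $\wh\gamma_{0}$ by $\wh\gamma$, a shortest arc joining $\wh p$ to $\wh q$ within the compact domain bounded by $\wh\mu_{1}$, $\wh\mu_{2}$ and $\wh\gamma_{0}$, only decreases length, so $L(\wh\gamma) \le L(\wh\gamma_{0}) \le d(p,q)$ and ${\rm GOT}(\partial\wt X,\wh p,\wh q)$ is a bona fide generalized open triangle; the angle condition \eqref{weak-angle} persists because shortening $\wh\gamma$ inside that domain cannot increase the angles it makes with $\wh\mu_{1}$, $\wh\mu_{2}$ at the vertices. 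Finally $d(\wh p,\wh q) \le L(\wh\gamma)$ trivially, and the lower bound $d(\partial X,q) - d(\partial X,p) \le d(\wh p,\wh q)$ is just the triangle inequality in $\wt X$ applied to $\wh p$, $\wh q$ and the foot of $\wh\mu_{1}$ (or directly: $|d(\partial\wt X,\wh p) - d(\partial\wt X,\wh q)| \le d(\wh p,\wh q)$).

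I expect the main obstacle to be the gluing step: one must verify that after translating the comparison pieces by $\tilde y$-isometries and possibly reflecting, they can be assembled into a \emph{simple} broken path bounding a genuine compact domain, so that "shortest arc in the domain bounded by $\wh\mu_{1}$, $\wh\mu_{2}$, $\wh\gamma_{0}$" makes sense and the passage to $\wh\gamma$ is legitimate. Controlling the angle behavior at the interior break points of $\wh\gamma_{0}$ — ensuring the broken path turns consistently so that the region it cuts off with the two $\partial\wt X$-segments is well defined — is the delicate point; here one uses that each Lemma~\ref{lem3.8} comparison triangle lies in $\wt X$ on the correct side and that the radial-distance function along $\gamma$ varies monotonically on each sufficiently short subinterval, so the pieces can be laid down without overlap. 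Everything else — the existence of the fine thin partition, the length bookkeeping, and the final triangle inequalities — is routine given Lemma~\ref{lem3.8}.
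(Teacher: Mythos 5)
Your route is the same as the paper's: subdivide $\gamma$ so finely that each sub-triangle ${\rm OT}(\partial X,\gamma(s_{i-1}),\gamma(s_{i}))$ is thin, apply Lemma \ref{lem3.8} to each piece, glue the comparison triangles in $\wt{X}$ along the shared $\partial\wt{X}$-segments, and replace the resulting broken geodesic by a shortest arc in the region it bounds together with the two outer $\partial\wt{X}$-segments. The genuine gap is precisely at the step you flag as the main obstacle but do not close. The mechanism that makes the glued configuration work is not that ``the radial distance varies monotonically on short subintervals so the pieces can be laid down without overlap''; it is a convexity statement at the break points. Since $\gamma$ is a geodesic of $X$, at each interior vertex $\gamma(s_{i})$ the two angles toward $\gamma(s_{i-1})$ and $\gamma(s_{i+1})$ sum to exactly $\pi$, and combining this with the two angle inequalities of Lemma \ref{lem3.8} at that vertex gives $\angle(\partial\wt{X},\tilde{\gamma}(s_{i}),\tilde{\gamma}(s_{i-1}))+\angle(\partial\wt{X},\tilde{\gamma}(s_{i}),\tilde{\gamma}(s_{i+1}))\le\pi$. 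Hence the broken geodesic bends away from the domain $\cD$ bounded by the two outer $\partial\wt{X}$-segments and the broken side, so $\cD$ is locally convex. This is what guarantees that a shortest arc $\wh{\gamma}$ joining the two vertices inside the closure of $\cD$ exists and is a genuine geodesic segment of $\wt{X}$ (it cannot be pressed against the broken side), hence that $(\partial\wt{X},\wh{p},\wh{q}\,;\wh{\gamma},\wh{\mu}_{1},\wh{\mu}_{2})$ really is a generalized open triangle; it is also exactly why the angles of $\wh{\gamma}$ at $\wh{p}$, $\wh{q}$ do not exceed those of the first and last segments of the broken geodesic, so that the endpoint inequalities from Lemma \ref{lem3.8} descend to (\ref{weak-angle}). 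Without this supplementary-angle observation your construction has no control at the interior break points, and the passage from the broken path to $\wh{\gamma}$ (and the survival of the angle bounds) is unjustified.

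Two smaller remarks. Since $\gamma$ is a minimal geodesic, $\sum_{j}d(\gamma(s_{j-1}),\gamma(s_{j}))=d(p,q)$ exactly, not ``$\le$ with equality only if no subdivision was needed''; this is harmless since only the inequality $L(\wh{\gamma})\le d(p,q)$ is used. Your justification for the existence of a thin subdivision (compactness of $\gamma$, the fact that $\gamma$ avoids $\partial X$, and a positive lower bound for $\inj(\tilde{q}_{s})$) is sound and fills in what the paper leaves implicit.
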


\begin{proof}
Let 
$s_{0} := 0 < s_{1} <\,\cdots\,< s_{k -1} <s_{k} :=  d(p, q)$ 
be a subdivision of $[0, d(p, q)]$ such that, 
for each $i \in \{ 1, \, \ldots\,, k \}$, the open triangle 
${\rm OT}(\partial X, \gamma (s_{i-1}), \gamma (s_{i}))$ is thin. 
It follows from Lemma \ref{lem3.8} that, for each triangle ${\rm OT}(\partial X, \gamma (s_{i-1}), \gamma (s_{i}))$, 
there exists an open triangle 
$\wt{\triangle}_{i} := {\rm OT}(\partial \wt{X}, \tilde{\gamma} (s_{i-1}), \tilde{\gamma} (s_{i}))$ in $\wt{X}$ such that 
\begin{align}
&
d(\partial \wt{X},\tilde{\gamma} (s_{i-1})) = d(\partial X, \gamma (s_{i-1})),
\label{weak-1}\\[2mm]
&
d(\tilde{\gamma} (s_{i-1}), \tilde{\gamma} (s_{i})) = d(\gamma (s_{i-1}), \gamma (s_{i})), 
\label{weak-2}\\[2mm]
&
d(\partial \wt{X}, \tilde{\gamma} (s_{i})) = d(\partial X, \gamma (s_{i})),
\label{weak-3}
\end{align}
and that 
\begin{align}
&\angle(\partial X, \gamma (s_{i-1}), \gamma (s_{i})) 
\ge 
\angle(\partial \wt{X}, \tilde{\gamma} (s_{i-1}), \tilde{\gamma} (s_{i})),\label{weak-4}\\[2mm]  
&\angle(\partial X,  \gamma (s_{i}), \gamma (s_{i-1})) 
\ge 
\angle(\partial \wt{X}, \tilde{\gamma} (s_{i}), \tilde{\gamma} (s_{i-1})).\label{weak-5}
\end{align}
Here $\angle(\partial X, \gamma (s_{i-1}), \gamma (s_{i})) $ denotes 
the angle between two sides joining $\gamma (s_{i-1})$ to $\partial X$ and 
$\gamma (s_{i})$ forming the triangle ${\rm OT}(\partial X, \gamma (s_{i-1}), \gamma (s_{i}))$.
Under this situation, 
draw $\wt{\triangle}_{1} = {\rm OT}(\partial \wt{X}, \tilde{p}, \tilde{\gamma} (s_{1}))$ in $\wt{X}$ 
satisfying (\ref{weak-1}), (\ref{weak-2}), (\ref{weak-3}), (\ref{weak-4}), (\ref{weak-5}) for $i = 1$. 
Inductively, we draw an open triangle 
$\wt{\triangle}_{i + 1} = {\rm OT}(\partial \wt{X}, \tilde{\gamma} (s_{i}), \tilde{\gamma} (s_{i + 1}))$ in $\wt{X}$, 
which is adjacent to $\wt{\triangle}_{i}$ so as to have 
the $\partial \wt{X}$-segment to $\tilde{\gamma} (s_{i})$ as a common side. 
Since 
\[
\angle(\partial X,  \gamma (s_{i}), \gamma (s_{i-1})) 
+ 
\angle(\partial X,  \gamma (s_{i}), \gamma (s_{i+1})) 
= \pi,
\]
for each $i = 1, 2,\, \ldots \,, k -1$,  we get, by (\ref{weak-4}) and (\ref{weak-5}), 
\begin{equation}\label{weak-6}
\angle(\partial \wt{X}, \tilde{\gamma} (s_{i}), \tilde{\gamma} (s_{i-1})) 
+ 
\angle(\partial \wt{X}, \tilde{\gamma} (s_{i}), \tilde{\gamma} (s_{i+1})) 
\le \pi
\end{equation}
and 
\begin{equation}\label{weak-7}
\angle\,p \ge \angle(\partial \wt{X}, \tilde{\gamma} (s_{0}), \tilde{\gamma} (s_{1})), \quad 
\angle\,q \ge \angle(\partial \wt{X}, \tilde{\gamma} (s_{k}), \tilde{\gamma} (s_{k -1})).
\end{equation}
Then, we get a domain $\cD$ bounded by two $\partial \wt{X}$-segments $\wt{\mu}_{0}, \wt{\mu}_{k}$ 
to $\tilde{\gamma} (s_{0}), \tilde{\gamma} (s_{k})$, respectively, and $\wt{\eta}$, where 
$\wt{\eta}$ denotes the broken geodesic consisting of the opposite sides 
of $\wt{\triangle}_{i}$ ($i = 1, 2, \, \ldots,\, k$) to $\partial \wt{X}$. 
Since the domain $\cD$ is locally convex by (\ref{weak-6}), 
there exists a minimal geodesic segment $\wh{\gamma}$ in the closure of $\cD$ joining 
$\tilde{\gamma} (s_{0})$ to $\tilde{\gamma} (s_{k})$. 
From (\ref{weak-7}), it is clear that the generalized open triangle 
$(\partial \wt{X}, \tilde{\gamma} (s_{0}), \tilde{\gamma} (s_{0})\,;\,\wh{\gamma}, \wt{\mu}_{0}, \wt{\mu}_{k})$ has the required properties in our theorem. 
$\qedd$
\end{proof}

\section{Definitions and notations for Sections \ref{sec:app1} and \ref{sec:app2}}

Throughout this section, let $(X, \partial X)$ denote a complete connected 
Riemannian manifold $X$ with smooth boundary $\partial X$. Our purpose of this section is to 
recall the definitions of $\partial X$-Jacobi fields, focal loci of $\partial X$, and cut loci of $\partial X$, 
which will appear in Sections \ref{sec:app1} and \ref{sec:app2}.

\begin{definition}{\bf ($\partial X$-Jacobi field)} 
Let $\mu :[0, \infty) \lra X$ be a unit speed geodesic emanating perpendicularly from $\partial X$. 
A Jacobi field $J_{\partial X}$ along $\mu$ is called a {\em $\partial X$-Jacobi field}, 
if $J_{\partial X}$ satisfies 
$J_{\partial X} (0) \in T_{\mu(0)} \partial X$ and 
$J'_{\partial X} (0) + A_{\mu'(0)} (J_{\partial X} (0)) \in (T_{\mu(0)} \partial X)^{\bot}$.
Here $J'$ denotes the covariant derivative of $J$ along $\mu$, 
and $A_{\mu'(0)}$ denotes the shape operator of $\partial X$. 
\end{definition}

\begin{definition}{\bf (Focal locus of $\partial X$)} A point $\mu(t_{0})$, $t_{0} \not= 0$, 
is called a {\em focal point of $\partial X$} along a unit speed geodesic $\mu :[0, \infty) \lra X$ 
emanating perpendicularly from $\partial X$, 
if there exists a non-zero $\partial X$-Jacobi field $J_{\partial X}$ along $\mu$ such that 
$J_{\partial X} (t_{0}) = 0$. 
The {\em focal locus $\Focal (\partial X)$ of $\partial X$} is the union of the focal points of 
$\partial X$ along all of the unit speed geodesics emanating perpendicularly 
from $\partial X$.
\end{definition}

\begin{definition}{\bf (Cut locus of $\partial X$)}\label{def_of_cut_locus2011_02_16}
Let $\mu :[0, \ell_{0}] \lra X$ be a $\partial X$-segment. 
The end point $\mu(\ell_{0})$ of $\mu ([0, \ell_{0}])$ 
is called a {\em cut point of} $\partial X$ along $\mu$, 
if any extended geodesic $\bar{\mu} :[0,\ell_{1}] \lra X$ of $\mu$, $\ell_{1} > \ell_{0}$, 
is not a $\partial X$-segment anymore. 
The {\em cut locus $\Cut (\partial X)$ of $\partial X$} is 
the union of the cut points of $\partial X$ along all 
of the $\partial X$-segments.
\end{definition}

\section{Proof of Theorem \ref{thm1.4}}\label{sec:app1}
From the similar argument in the proof of \cite[Lemma 3.1]{ST}, 
one may prove 

\begin{lemma}\label{lem6.1}
Let 
\begin{align}
&f''(t) + K(t)f(t) = 0, \quad f(0) = 1, \quad t \in [0, \infty), \notag\\[2mm]
&m''(t) + G(t)m(t) = 0, \quad m(0) = 1, \quad m'(0) = 0, \quad t \in [0, \infty),\notag
\end{align}
be two ordinary differential equations with $K(t) \ge G(t)$ on $[0, \infty)$. 
\begin{enumerate}[{\rm ({L--}1)}]
\item
If $f > 0$ on $(0, \infty)$, $f'(0) = 0$, and 
\[
\int_{0}^{\infty} \frac{1}{m (t)^{2}}dt = \infty,
\] 
then $K(t) = G(t)$ on $[0, \infty)$. 
\item
If $m > 0$ on $(0, \infty)$, $f'(0) < 0$, and 
\[
\int_{0}^{\infty} \frac{1}{m (t)^{2}}dt = \infty,
\] 
then there exists $t_{0} \in (0, \infty)$ such that $f > 0$ on $[0, t_{0})$ and $f(t_{0}) = 0$. 
\end{enumerate} 
\end{lemma}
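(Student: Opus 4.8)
The plan is to treat (L--1) and (L--2) as two applications of a single Sturm/Riccati-type comparison. First I would recall the standard Wronskian identity: set $W(t) := f'(t)m(t) - f(t)m'(t)$, so that $W'(t) = f''m - fm'' = \bigl(G(t) - K(t)\bigr)f(t)m(t) \le 0$ wherever $f \ge 0$ and $m > 0$. Thus $W$ is non-increasing on any interval where $f$ stays non-negative. Since $W(0) = f'(0)m(0) - f(0)m'(0) = f'(0)$, the sign and vanishing of $f'(0)$ control the initial value of $W$, and the hypothesis $K \ge G$ is exactly what makes the comparison run in one direction.

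For (L--2), where $f'(0) < 0$, we have $W(0) = f'(0) < 0$. As long as $f > 0$ we may write $\frac{d}{dt}\!\left(\frac{f}{m}\right) = \frac{W}{m^2} \le \frac{W(0)}{m^2} = \frac{f'(0)}{m^2}$, using monotonicity of $W$ and positivity of $m$. Integrating from a small $\varepsilon > 0$ to $t$ gives $\frac{f(t)}{m(t)} \le \frac{f(\varepsilon)}{m(\varepsilon)} + f'(0)\int_\varepsilon^t \frac{ds}{m(s)^2}$. If $f$ never vanished on $(0,\infty)$, the right-hand side would remain positive for all $t$; but $\int^\infty m^{-2} = \infty$ forces it to $-\infty$, a contradiction. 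Hence there is a first zero $t_0 \in (0,\infty)$ with $f > 0$ on $[0, t_0)$ and $f(t_0) = 0$. (One must check $f$ does not vanish immediately: since $f(0)=1$, continuity gives $f > 0$ near $0$, so the "first zero" is well-defined and positive.)

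For (L--1), where $f > 0$ on all of $(0,\infty)$ and $f'(0) = 0$, we have $W(0) = 0$, and monotonicity gives $W(t) \le 0$ for all $t \ge 0$, i.e. $\frac{d}{dt}\!\left(\frac{f}{m}\right) \le 0$, so $f/m$ is non-increasing with value $1$ at $0$; in particular $f(t) \le m(t)$. The goal is to upgrade the inequality $K \ge G$ to equality. Suppose $K(t_1) > G(t_1)$ at some $t_1 > 0$; then $W'(t) < 0$ on a neighborhood of $t_1$ (using $f, m > 0$ there), so $W$ becomes strictly negative and stays $\le W(t_1) < 0$ thereafter. Then $\frac{d}{dt}(f/m) \le W(t_1)/m^2$ for $t \ge t_1$, and integrating together with $\int^\infty m^{-2} = \infty$ forces $f/m \to -\infty$, contradicting $f > 0$. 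Therefore $K \equiv G$ on $[0,\infty)$.

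The main obstacle I anticipate is purely bookkeeping at $t = 0$: $m(0) = 1 > 0$ so there is no singularity there, but one should argue cleanly that $f$ stays positive on an initial interval (in (L--2), so that the Riccati/quotient manipulation is valid near $0$ and the "first zero" statement is correctly phrased), and that the divergent integral $\int_0^\infty m^{-2}$ may be split off from a harmless finite piece $\int_0^\varepsilon m^{-2}$. Everything else is the standard Sturm comparison packaged through the Wronskian $W = f'm - fm'$, and the reference to \cite[Lemma 3.1]{ST} suggests exactly this line of argument.
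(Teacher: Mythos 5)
Your proposal is correct, and it is essentially the intended argument: the paper itself prints no proof of Lemma \ref{lem6.1}, deferring to the argument of \cite[Lemma 3.1]{ST}, which is precisely this Sturm-type comparison via the Wronskian $W=f'm-fm'$, the identity $(f/m)'=W/m^{2}$, and the divergence of $\int_{0}^{\infty}m(t)^{-2}\,dt$. In effect you have supplied the details the paper leaves to the reference (the only point worth noting is that the quotient argument uses $m>0$, which in (L--1) is automatic in the paper's setting since the warping function of the model is positive, or can be deduced from $f\le m$ on any interval where $m>0$).
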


\bigskip

Hereafter, let $(X, \partial X)$ be a complete non-compact connected Riemannian 
$n$-manifold $X$ with smooth {\bf convex} boundary $\partial X$ whose radial curvature 
is bounded from below by that of a model surface $(\wt{X}, \partial \wt{X})$ 
with its metric (\ref{model-metric}). Moreover, we denote by 
\[
\cI_{\partial X}^{\ell} (V, W) := I_{\ell} (V, W) - 
\big\langle 
A_{\mu'(0)} (V (0)), W (0)
\big\rangle 
\]
the index form with respect to a $\partial X$-segment $\mu : [0, \ell] \lra X$ for piecewise $C^{\infty}$ 
vector fields $V, W$ along $\mu$, where we set
\[
I_{\ell} (V, W) 
:= 
\int^{\ell}_{0} 
\left\{ 
\big\langle 
V', W'
\big\rangle
-
\big\langle 
R(\mu', V) \mu', W
\big\rangle
\right\}
dt,
\]
which is a symmetric bilinear form. Furthermore, 

\begin{center}
we assume that $X$ admits at least one $\partial X$-ray. 
\end{center}

\medskip\noindent 
By Lemma \ref{lem6.1}, we have 

\begin{lemma}\label{lem6.2}
Let $\mu :[0, \infty) \lra X$ be a $\partial X$-ray. 
If $(\wt{X}, \partial \wt{X})$ satisfies 
\[
\int_{0}^{\infty} \frac{1}{m (t)^{2}}dt = \infty,
\]
then, $\mu(0)$ is the geodesic point in $\partial X$, i.e., 
the second fundamental form vanishes at the point. 
\end{lemma}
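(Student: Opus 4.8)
The plan is to argue by contradiction using the index form $\cI_{\partial X}^{\ell}$ associated with the $\partial X$-ray $\mu$, together with the comparison ODEs of Lemma \ref{lem6.1}. Suppose $\mu(0)$ is not a geodesic point of $\partial X$; then the shape operator $A_{\mu'(0)}$ has a positive eigenvalue $\lambda > 0$ with unit eigenvector $u \in T_{\mu(0)}\partial X$. First I would set up the standard comparison between the growth of $\partial X$-Jacobi fields along $\mu$ and the model warping function $m$. Parallel-translate $u$ along $\mu$ to obtain a parallel field $E(t)$, and consider the vector field $V(t) = f(t) E(t)$ along $\mu|_{[0,\ell]}$, where $f$ solves $f'' + K(t) f = 0$ with $f(0) = 1$ and $f'(0) = -\lambda$; here $K(t) := K_X(\mu'(t) \wedge E(t))$ is the relevant radial sectional curvature, which satisfies $K(t) \ge G(\tilde\mu(t)) =: G(t)$ by the radial curvature bound. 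The choice $f'(0) = -\lambda$ is dictated by the $\partial X$-Jacobi field condition $J'(0) + A_{\mu'(0)}(J(0)) \in (T\partial X)^{\perp}$, so that $V$ is (a multiple of) the $\partial X$-Jacobi field with $V(0) = u$.

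The second step is the dichotomy on $f$. Since $\mu$ is a $\partial X$-ray, it is a $\partial X$-segment for all $\ell$, hence minimizes distance to $\partial X$; by the second-variation (index form) characterization of $\partial X$-segments, $\mu$ carries no focal point of $\partial X$ in its interior, which forces $f > 0$ on $(0,\infty)$ — more precisely, if $f$ vanished at some $t_0$ the field $V$ would be a nontrivial $\partial X$-Jacobi field vanishing at $t_0$, producing a focal point and contradicting minimality of $\mu$ beyond $t_0$. Now apply Lemma \ref{lem6.1}: with $f > 0$ on $(0,\infty)$, $f'(0) = -\lambda < 0$, and the hypothesis $\int_0^\infty m(t)^{-2}\,dt = \infty$, part (L--2) would give a zero of $f$ in $(0,\infty)$, a contradiction. (Alternatively, one compares $f$ with $m$ directly: the Wronskian-type quantity $w = f'm - fm'$ satisfies $w' = (G - K)fm \le 0$ and $w(0) = -\lambda < 0$, so $(f/m)' = w/m^2 \le -\lambda/m^2$ on the interval where $f > 0$; integrating and using divergence of $\int m^{-2}$ forces $f/m$, hence $f$, to become negative.) This contradiction shows no positive eigenvalue of $A_{\mu'(0)}$ can exist; since $\partial X$ is convex all eigenvalues are $\ge 0$, so $A_{\mu'(0)} = 0$, i.e. the second fundamental form vanishes at $\mu(0)$.

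The main obstacle I anticipate is making rigorous the claim that a $\partial X$-ray has no interior focal point of $\partial X$, and relating this cleanly to positivity of $f$ rather than just to the vanishing of an honest $\partial X$-Jacobi field: one must check that the index form $\cI_{\partial X}^{\ell}(V,V)$, which includes the boundary term $-\langle A_{\mu'(0)}(V(0)), V(0)\rangle = -\lambda < 0$, is nonnegative for all admissible $V$ along a minimizing $\partial X$-segment, and that a focal point in the interior would violate this. This is exactly the content invoked in the cited argument of \cite[Lemma 3.1]{ST}, so I would follow that template. The remaining computations — the Wronskian identity, the sign bookkeeping, and the limit argument — are routine once the focal-point-free property is in hand.
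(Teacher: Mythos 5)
Your overall strategy is the paper's: assume $\lambda>0$, build the test field $f(t)E(t)$ with $f''+K_X(\mu'(t),E(t))f=0$, $f(0)=1$, $f'(0)=-\lambda$, and play the divergence of $\int_0^\infty m(t)^{-2}dt$ (via (L--2), or equivalently your Wronskian computation, which is just the proof of (L--2)) against the fact that a $\partial X$-ray is focal-point free. The one genuine flaw is the step you use to get $f>0$ on $(0,\infty)$: you assert that $V=fE$ ``would be a nontrivial $\partial X$-Jacobi field vanishing at $t_0$.'' It is not. $V$ satisfies the scalar comparison ODE, not the vector Jacobi equation $J''+R(\mu',J)\mu'=0$; these coincide only if $E(t)$ happens to be an eigenvector of the curvature operator $R(\mu'(t),\cdot\,)\mu'(t)$ for every $t$, and there is no reason for a parallel field to have this property. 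So a zero of $f$ does not directly produce a focal point, and your main chain of implications breaks at exactly this point.

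The correct repair is the index-form computation the paper carries out (and which you gesture at in your last paragraph, but do not complete). Supposing $f(t_0)=0$ with $f>0$ on $[0,t_0)$, one has
$I_{t_0}(fE,fE)=\int_0^{t_0}\bigl(f'^2-K_Xf^2\bigr)dt=\int_0^{t_0}(ff')'\,dt=-f(0)f'(0)=\lambda$,
hence $\cI_{\partial X}^{t_0}(fE,fE)=\lambda-\langle A_{\mu'(0)}E(0),E(0)\rangle=0$. Since $\mu$ is a $\partial X$-ray, $\partial X$ has no focal point along $\mu$, and therefore the index form is \emph{strictly} positive on every nonzero admissible field $Z$ with $Z(0)\in T_{\mu(0)}\partial X$ and $Z(t_0)=0$ (Sakai, Ch.~III, Lemma~2.9); this forces $fE\equiv 0$, a contradiction. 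Note that the property you propose to verify, mere \emph{nonnegativity} of $\cI_{\partial X}^{t_0}$ along a minimizing $\partial X$-segment, is not enough, because the value you must rule out is exactly $0$; you need either the strict-positivity statement above or the standard argument that a null vector of a positive semidefinite index form is an honest $\partial X$-Jacobi field (whence $t_0$ would be a focal point). With that substitution your argument closes and is, apart from the reversed order of quoting (L--2), the same proof as the paper's.
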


\begin{proof}
Let $E$ be a unit parallel vector field along $\mu$ such that
\begin{equation}\label{lem6.2-1}
A_{\mu'(0)} (E(0)) = \lambda E(0),
\end{equation}
\begin{equation}\label{lem6.2-2}
E(t)\!\perp\!\mu'(t). 
\end{equation}
Here $\lambda$ denotes an eigenvalue of the shape operator $A_{\mu'(0)}$ of $\partial X$. 
Since $\partial X$ is convex, 
$\lambda \ge 0$ holds. 
Consider a smooth vector field $Y (t) := f(t) E(t)$ along $\mu$ satisfying 
\[
f''(t) + K_{X}(\mu'(t), E(t))f(t) = 0,
\]
with initial conditions 
\begin{equation}\label{lem6.2-4}
\quad f(0) = 1, \quad f'(0) = - \lambda. 
\end{equation}
Here $K_{X}(\mu'(t), E(t))$ denotes 
the sectional curvature with respect to the $2$-dimensional linear space spanned 
by $\mu'(t)$ and $E(t)$ at $\mu(t)$. 
Notice that $Y$ satisfies 
$Y(0) \in T_{\mu(0)} \partial X$ and $Y'(0) + A_{\mu'(0)} (Y(0)) = 0 \in (T_{\mu(0)} \partial X)^{\bot}$,
by (\ref{lem6.2-1}), (\ref{lem6.2-2}), and (\ref{lem6.2-4}). 
Suppose that 
$\lambda > 0$.
Since $f'(0) < 0$ and 
\[
\int_{0}^{\infty} \frac{1}{m (t)^{2}}dt = \infty,
\]
it follows from (L--2) in Lemma \ref{lem6.1} that there exists $t_{0} \in (0, \infty)$ 
such that $f > 0$ on $[0, t_{0})$ and  
\begin{equation}\label{lem6.2-6}
f(t_{0}) = 0,
\end{equation}
i.e., 
\begin{equation}\label{lem6.2-6.1}
Y (t) \not= 0, \quad t \in [0, t_{0})
\end{equation}
and 
$Y(t_{0}) = 0$. 
Since 
$\big\langle R(\mu'(t), Y(t)) \mu'(t), Y(t) \big\rangle 
= f(t)^{2} \big\langle R(\mu'(t), E(t)) \mu'(t), E(t) \big\rangle 
= -f''(t)f(t)$, 
we have, by (\ref{lem6.2-4}) and (\ref{lem6.2-6}), 
\begin{equation}\label{lem6.2-7}
I_{t_{0}} (Y, Y) 
= 
\int^{t_{0}}_{0} 
\frac{d}{dt} (ff')
dt 
= f(t_{0})f'(t_{0}) - f(0)f'(0) 
= \lambda. 
\end{equation}
Thus, by (\ref{lem6.2-1}),  (\ref{lem6.2-4}), and (\ref{lem6.2-7}), 
\begin{equation}\label{lem6.2-8}
\cI_{\partial X}^{t_{0}} (Y, Y) 
= I_{t_{0}} (Y, Y) - 
\big\langle 
A_{\mu'(0)} (Y (0)), Y (0)
\big\rangle
= \lambda - \lambda = 0. 
\end{equation}
On the other hand, since $\partial X$ has no focal point along $\mu$, 
for any non-zero vector field $Z$ along $\mu$ satisfying $Z(0) \in T_{\mu(0)} \partial X$ and $Z(t_{0}) = 0$, 
\begin{equation}\label{lem6.2-9}
\cI_{\partial X}^{t_{0}} (Z, Z) > 0
\end{equation}
holds (cf. Lemma 2.9 in \cite[Chapter III]{S}). 
Thus, by (\ref{lem6.2-8}) and (\ref{lem6.2-9}),  
$Y \equiv 0$ on $[0, t_{0}]$. 
This is a contradiction to (\ref{lem6.2-6.1}). 
Therefore, $\lambda = 0$, i.e., $\mu(0)$ is the geodesic point in $\partial X$. 
$\qedd$
\end{proof}

\bigskip

Here we want to go over some fundamental tools on $(\wt{X}, \partial \wt{X})$: 
A unit speed geodesic $\tilde{\gamma} : [0, a) \lra \wt{X}$ $(0 < a \le \infty)$ is expressed by 
$
\tilde{\gamma}(s) 
= (\tilde{x}(\tilde{\gamma}(s)), \tilde{y}(\tilde{\gamma}(s)))
=: (\tilde{x}(s), \tilde{y}(s))
$.
Then, there exists a non-negative constant $\nu$ depending only 
on $\tilde{\gamma}$ such that 
\begin{equation}\label{clairaut}
\nu = m(\tilde{x}(s))^{2} |\tilde{y}'(s)| 
= m(\tilde{x}(s)) \sin \angle(\tilde{\gamma}'(s), (\partial / \partial \tilde{x})_{\tilde{\gamma}(s)}).
\end{equation}
This (\ref{clairaut}) is a famous formula -- the {\em Clairaut relation}. 
The constant $\nu$ is called the {\em Clairaut constant of} $\tilde{\gamma}$. 
Remark that, by (\ref{clairaut}),  
{\em $\nu > 0$ if and only if $\tilde{\gamma}$ is 
not a $\partial \wt{X}$-ray, or its subarc}. 
Since $\tilde{\gamma}$ is unit speed, we have, by (\ref{clairaut}), 
\begin{equation}\label{geodesic-eq}
\tilde{x}'(s) = \pm \frac{\sqrt{m(\tilde{x}(s))^{2} - \nu^2}}{m(\tilde{x}(s))}.
\end{equation}
By (\ref{geodesic-eq}), we see that 
{\em $\tilde{x}'(s) = 0$ if and only if $m(\tilde{x}(s)) = \nu$}. 
Moreover, by (\ref{geodesic-eq}), we have that, 
for a unit speed geodesic $\tilde{\gamma}(s) = (\tilde{x}(s), \tilde{y}(s))$, $s_1 \le s \le s_2$, 
with the Clairaut constant $\nu$, 
\begin{equation}\label{length-eq}
s_2 - s_1 
= \phi(\tilde{x}'(s)) \int_{\tilde{x}(s_1)}^{\tilde{x}(s_2)} \frac{m(t)}{\sqrt{m(t)^{2} -\nu^2}}\,dt,
\end{equation}
if $\tilde{x}'(s) \not= 0$ on $(s_1, s_2)$. 
Here, $\phi(\tilde{x}'(s))$ denotes the sign of $\tilde{x}'(s)$. 
Furthermore, we have a lemma with respect to the length $L(\tilde{\gamma})$ of $\tilde{\gamma}$:

\begin{lemma}\label{lem6.3}
Let $\tilde{\gamma} : [0, s_{0}] \lra \wt{X} \setminus \partial \wt{X}$ denote a unit speed 
geodesic segment with Clairaut constant $\nu$. 
Then, $L(\tilde{\gamma})$ is not less than 
\begin{equation}\label{length-ineq}
t_{2} - t_{1} + \frac{\nu^{2}}{2} \int_{t_{1}}^{t_{2}} \frac{1}{m(t)\sqrt{m(t)^{2} -\nu^{2}}}\,dt,
\end{equation} 
where we set $t_{1} := \tilde{x}(0)$ and $t_{2} := \tilde{x}(s_{0})$.
\end{lemma}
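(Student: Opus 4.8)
The plan is to bound the length $L(\tilde\gamma)=s_0$ by splitting the constant speed $1$ of $\tilde\gamma$ into a ``radial part'' $|\tilde x'|$ and a ``transverse part'' $1-|\tilde x'|$, estimating each separately, and then converting the $s$-integrals into $t$-integrals by the change of variables underlying (\ref{geodesic-eq}) and (\ref{length-eq}). First I would dispose of two trivial cases. If $\nu=0$, then by (\ref{clairaut}) $\tilde\gamma$ is a $\partial\wt{X}$-ray or a subarc of one, so $\tilde x$ is affine with $|\tilde x'|\equiv1$, whence $L(\tilde\gamma)=|t_2-t_1|\ge t_2-t_1$ and the integral term in (\ref{length-ineq}) is absent; and if $t_1=t_2$ the asserted bound is merely $0\le L(\tilde\gamma)$. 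Since reversing the parametrization of $\tilde\gamma$ interchanges $t_1$ and $t_2$ without changing either side of (\ref{length-ineq}), I may henceforth assume $\nu>0$ and $t_1<t_2$.

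By (\ref{geodesic-eq}) we have $|\tilde x'(s)|=\sqrt{m(\tilde x(s))^2-\nu^2}/m(\tilde x(s))\le1$, so
\[
L(\tilde\gamma)=\int_0^{s_0}\!|\tilde x'(s)|\,ds+\int_0^{s_0}\!\bigl(1-|\tilde x'(s)|\bigr)\,ds,
\]
with both integrands nonnegative. The first integral is the total variation of $s\mapsto\tilde x(s)$ on $[0,s_0]$, which is at least $t_2-t_1$ since $\tilde x$ runs continuously from $t_1$ to $t_2$. For the second, the elementary inequality $\sqrt{1-x}\le1-\tfrac{x}{2}$ on $[0,1]$ gives $1-|\tilde x'(s)|\ge\tfrac12\,\nu^2/m(\tilde x(s))^2$, hence
\[
\int_0^{s_0}\!\bigl(1-|\tilde x'(s)|\bigr)\,ds\ \ge\ \frac{\nu^2}{2}\int_0^{s_0}\frac{ds}{m(\tilde x(s))^2}.
\]
Subdividing $[0,s_0]$ at the critical points of $\tilde x$ — the points where $m(\tilde x(s))=\nu$ — into finitely many arcs on which $\tilde x$ is strictly monotone, the substitution $t=\tilde x(s)$, for which $ds=m(t)\,dt/\sqrt{m(t)^2-\nu^2}$, turns $\int m(\tilde x(s))^{-2}\,ds$ into $\int\bigl(m(t)\sqrt{m(t)^2-\nu^2}\bigr)^{-1}\,dt$ over the corresponding range of $t$; since these ranges cover $[t_1,t_2]$ and the integrand is nonnegative, summing gives $\int_0^{s_0}m(\tilde x(s))^{-2}\,ds\ge\int_{t_1}^{t_2}\bigl(m(t)\sqrt{m(t)^2-\nu^2}\bigr)^{-1}\,dt$, and combining the two estimates yields exactly (\ref{length-ineq}).

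The one place needing care — and the only genuine obstacle, since the rest is routine — is the reduction to finitely many monotone arcs. Differentiating (\ref{geodesic-eq}) shows $\tilde x''(s)=\nu^2 m'(\tilde x(s))/m(\tilde x(s))^3$, so at a critical point of $\tilde x$ at which moreover $m'(\tilde x(s))=0$, the latitude curve $\tilde x\equiv\tilde x(s)$ is itself a geodesic with the same position and velocity as $\tilde\gamma$; then $\tilde\gamma$ coincides with it and $\tilde x$ is constant, a case already settled. Outside that case every critical point of $\tilde x$ is nondegenerate, hence isolated, hence finite in number on the compact interval $[0,s_0]$, so the subdivision is legitimate; alternatively one bypasses it by the coarea identity $\int_0^{s_0}f(\tilde x(s))|\tilde x'(s)|\,ds=\int_{\R}f(t)\,\#\{s:\tilde x(s)=t\}\,dt$ together with $\#\{s:\tilde x(s)=t\}\ge1$ for $t\in[t_1,t_2]$. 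Finally, the $t$-integral in (\ref{length-ineq}) may be improper where $m(t_i)=\nu$, but it converges there (the integrand is $O((t-t_i)^{-1/2})$), and in any case its finiteness is automatic from $L(\tilde\gamma)<\infty$.
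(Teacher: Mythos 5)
Your argument is correct, and it reaches (\ref{length-ineq}) by a somewhat different organization than the paper's, so let me compare. The paper does not split the unit speed along the whole curve: using (implicitly) the fact that, by (\ref{clairaut}) and (\ref{geodesic-eq}), $\tilde\gamma$ can meet a level $\tilde x=t$ with $m(t)=\nu$ only tangentially, so that $\tilde x'(s)\neq 0$ whenever $\tilde x(s)$ lies in the open interval $(t_1,t_2)$, it selects a single sub-interval $[s_1,s_2]$ on which $\tilde x$ runs monotonically from $t_1$ to $t_2$, discards the rest of the curve, converts $s_2-s_1$ into $\int_{t_1}^{t_2} m(t)\big(m(t)^2-\nu^2\big)^{-1/2}dt$ by (\ref{length-eq}), and concludes with the pointwise inequality $m/\sqrt{m^2-\nu^2}\ge 1+\nu^2/\big(2m\sqrt{m^2-\nu^2}\big)$, which after multiplication by $\sqrt{m^2-\nu^2}/m$ is exactly your $\sqrt{1-x}\le 1-\tfrac{x}{2}$ with $x=\nu^2/m^2$. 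So the analytic core is identical; what differs is the bookkeeping. Your decomposition $1=|\tilde x'|+(1-|\tilde x'|)$ keeps the whole curve (so in principle it yields a stronger bound when $\tilde x$ is not monotone), but you then must control the turning points, which you do correctly: $\tilde x''=\nu^2 m'(\tilde x)/m(\tilde x)^3$ makes every critical point nondegenerate unless $\tilde\gamma$ is a parallel geodesic, which is your already-settled case $t_1=t_2$, so there are finitely many monotone arcs; the coarea alternative also works. The paper's single monotone arc renders that entire discussion unnecessary, at the price of the unproved (but true) non-crossing remark above. One small repair: reversing the parametrization does change the right-hand side of (\ref{length-ineq}) --- it reverses its sign --- so the symmetry you invoke is false as stated; the reduction to $t_1<t_2$ should instead be justified, as in the paper, by noting that for $t_1\ge t_2$ the right-hand side is non-positive (the integrand is non-negative on $[t_2,t_1]$ since $m\ge\nu$ on the range of $\tilde x$), so the inequality is trivial there, and your argument for $t_1<t_2$ is unaffected.
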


\begin{proof}
We may assume that $t_{2} > t_{1}$, otherwise (\ref{length-ineq}) is non-positive. 
Let $[s_{1}, s_{2}]$ be a sub-interval of $[0, s_{0}]$ such that 
$\tilde{x}'(s) \not= 0$ on $(s_{1}, s_{2})$. 
By (\ref{length-eq}), 
\[
L(\tilde{\gamma}|_{[s_{1}, s_{2}]}) 
= 
s_{2} - s_{1} 
=
\left|
\int_{\tilde{x}(s_1)}^{\tilde{x}(s_2)} \frac{m(t)}{\sqrt{m(t)^{2} -\nu^2}}\,dt
\right|.
\]
Since $\tilde{x}'(s) \not= 0$ for all $s \in (s_{1}, s_{2})$ with $\tilde{x}(s) \in [t_{1}, t_{2}]$, 
we may choose the numbers $s_{1}$ and $s_{2}$ in such a way that 
$\tilde{x}(s_{1}) = t_{1}$ and $\tilde{x}(s_{2}) = t_{2}$
or that $\tilde{x}(s_{1}) = t_{2}$ and $\tilde{x}(s_{2}) = t_{1}$.
Thus, we see that 
\begin{equation}\label{lem6.3-1}
L(\tilde{\gamma}) \ge \int_{t_{1}}^{t_{2}} \frac{m(t)}{\sqrt{m(t)^{2} -\nu^2}}\,dt.
\end{equation}
Since 
\[
\frac{m(t)}{\sqrt{m(t)^{2} -\nu^2}} \ge 1 + \frac{\nu^{2}}{ 2 m(t) \sqrt{m(t)^{2} -\nu^2} },
\]
we have, by (\ref{lem6.3-1}), 
\[
L(\tilde{\gamma}) \ge 
t_{2} - t_{1} + \frac{\nu^{2}}{2} \int_{t_{1}}^{t_{2}} \frac{1}{m(t)\sqrt{m(t)^{2} -\nu^{2}}}\,dt.
\]
$\qedd$
\end{proof}

The next lemma is well-known in the case of the cut locus of a point (see \cite{B}), 
Although it can be proved similarly, we here give a proof of the lemma totally different from it.  

\begin{lemma}\label{lem2009-03-03}
For any $q \in \Cut (\partial X) \cap (X \setminus \partial X)$ and any $\ve > 0$, 
there exists a point in $\Cut (\partial X) \cap B_{\ve} (q)$ which admits at least two 
$\partial X$-segments.
\end{lemma}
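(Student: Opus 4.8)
The plan is to argue by contradiction: suppose there is a ``bad'' point $q \in \Cut(\partial X) \cap (X \setminus \partial X)$ and some $\ve_0 > 0$ such that every point of $\Cut(\partial X) \cap B_{\ve_0}(q)$ admits exactly one $\partial X$-segment. Since $q$ itself is a cut point along a $\partial X$-segment but (by assumption) is not a focal point that is reached by two $\partial X$-segments, the standard dichotomy for cut loci forces $q$ to be a focal point of $\partial X$: indeed, if $q$ were joined to $\partial X$ by a unique $\partial X$-segment $\mu$ and $\mu(\ell)$ were not a focal point of $\partial X$, then $\mu$ could be extended slightly as a geodesic realizing the distance to $\partial X$, contradicting $q \in \Cut(\partial X)$. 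So the unique $\partial X$-segment $\mu : [0,\ell] \lra X$ to $q$ has $\mu(\ell) = q$ a focal point of $\partial X$, and there is a nonzero $\partial X$-Jacobi field $J$ along $\mu$ with $J(\ell) = 0$.

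Next I would exploit the first-order behaviour of the distance-to-$\partial X$ function near $q$ to produce nearby cut points with two $\partial X$-segments, contradicting the assumed uniqueness. Concretely, let $d := d(\partial X, \cdot)$. On a punctured neighbourhood of $q$ one can find points $q_j \to q$ lying in $\Cut(\partial X)$ (since $q$ is in the closure of the interior of $\Cut(\partial X)$ — or, if not, handle that degenerate case separately using that the focal cut point is a limit of genuine cut points along nearby $\partial X$-segments). For each such $q_j$, by the uniqueness assumption there is a \emph{unique} $\partial X$-segment $\mu_j$ to $q_j$; after passing to a subsequence, $\mu_j \to \mu$ in the $C^\infty$ topology (limits of $\partial X$-segments of bounded length are $\partial X$-segments). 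The contradiction will come from comparing the two ways a cut point can arise: either $q_j$ is a focal point along $\mu_j$, or $q_j$ admits (at least) two $\partial X$-segments. If infinitely many $q_j$ are focal along $\mu_j$, then the focal locus $\Focal(\partial X)$ contains an open piece of a hypersurface near $q$ whose structure I can contradict using that the index form $\cI_{\partial X}^{\ell}$ is positive definite on fields vanishing strictly before the first focal parameter (the inequality (\ref{lem6.2-9})-type argument), so focal points along nearby $\partial X$-segments cannot accumulate densely on $\Cut(\partial X)$ in this codimension-zero way. Hence for large $j$ the cut point $q_j$ is \emph{not} focal along $\mu_j$, which — again by the basic cut-locus dichotomy — forces a second $\partial X$-segment to $q_j$, the desired contradiction.

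The cleanest route, and the one I would actually write, is to avoid the local-hypersurface-structure discussion and instead use a direct two-segment-producing limit: take a sequence $q_j \in \Cut(\partial X) \cap B_\ve(q)$ with $q_j \neq q$ and $q_j \to q$; for each $j$ pick a $\partial X$-segment $\mu_j$ to $q_j$ (unique by assumption). Consider the ``broken competitor'': concatenate $\mu$ with a short minimal geodesic from $q$ to $q_j$. Because $q = \mu(\ell)$ is a focal point, the $\partial X$-Jacobi field $J$ gives a variation of $\mu$ through geodesics perpendicular to $\partial X$ whose lengths differ from $\ell$ by only $o(\text{endpoint displacement})$; combined with the fact that along the broken path from $q$ one gains essentially the full displacement $d(q,q_j)$ in the $d$-value (first variation, using $\mu'(\ell)$ direction), one shows $d(q_j) < \ell + o(\ldots)$ forces the actual $\partial X$-segment $\mu_j$ to leave every fixed neighbourhood of $\mu$ in direction, i.e.\ $\mu_j \not\to \mu$; but bounded-length $\partial X$-segments to $q_j \to q$ subconverge to a $\partial X$-segment to $q$, and $\mu$ is the only one — contradiction. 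Running this carefully for two independent choices of how $q_j$ approaches $q$ yields two distinct limiting $\partial X$-segments to a nearby cut point, which is exactly the statement.

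The main obstacle I expect is the bookkeeping in the limiting step: one must ensure (i) that there really are cut points $q_j \to q$ other than $q$ itself — i.e.\ that $q$ is not an isolated point of $\Cut(\partial X)$, which follows because a focal cut point is always a limit of cut points reached by $\partial X$-segments close to $\mu$, but this needs the convexity of $\partial X$ and completeness to rule out pathologies — and (ii) that the compactness/$C^\infty$-convergence of the $\mu_j$ is genuinely available, for which one uses that $\ell_j := d(\partial X, q_j) \to \ell$ is bounded and that $\partial X$ is a closed submanifold so the footpoints $\mu_j(0)$ stay in a compact subset of $\partial X$. Once those two points are pinned down, the first-variation estimate coming from the focal $\partial X$-Jacobi field $J$ does the rest, and the contradiction with the one-segment assumption on $B_\ve(q)$ closes the proof.
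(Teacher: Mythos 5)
Your reduction is fine as far as it goes: under your contradiction hypothesis every point of $\Cut(\partial X)\cap B_{\ve_0}(q)$ has a unique $\partial X$-segment and is therefore a first focal point of $\partial X$ along that segment. But the heart of the lemma is precisely to rule out this configuration --- a piece of $\Cut(\partial X)$ consisting entirely of unique-segment focal cut points --- and neither of your two routes actually does so. In the first route, the assertion that focal points along nearby $\partial X$-segments ``cannot accumulate densely on $\Cut(\partial X)$ in this codimension-zero way'' is essentially the statement to be proved, and positivity of the index form $\cI_{\partial X}^{\ell}$ on fields vanishing strictly before the first focal parameter gives no such conclusion: it says nothing about how the first focal points along varying normals are distributed, and a priori they can fill the cut locus near $q$. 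In the second route the contradiction is illusory: under your own uniqueness hypothesis, bounded-length $\partial X$-segments $\mu_j$ to cut points $q_j\to q$ automatically subconverge to the unique segment $\mu$, and a length estimate of the type $d(\partial X,q_j)\le \ell+o(d(q,q_j))$ --- which is all the focal Jacobi field can give you, and which is perfectly consistent with $\mu_j\to\mu$ --- cannot force $\mu_j$ away from $\mu$. So no contradiction is reached, and the closing claim that ``two independent choices of how $q_j$ approaches $q$'' produce two distinct segments to a nearby cut point does not correspond to any argument you have set up. You also defer, but never settle, the possibility that $q$ is isolated in $\Cut(\partial X)$.

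For comparison, the paper's proof supplies exactly the mechanism your sketch lacks. Working in the unit normal bundle near $\mu_q'(0)$, on the open set where $\nu(v_p)=\dim \ker (d\exp^{\perp})_{\lambda(v_p)v_p}$ attains its locally minimal value the first focal distance $\lambda$ is smooth (by \cite[Lemma 1]{IT2}), so there is a smooth non-zero kernel vector field $W$. If some sequence of nearby cut points is non-focal along its segments, those points already have two segments and the lemma holds; in the remaining case, where the nearby first focal points $\exp^{\perp}(\lambda(v_p)v_p)$ all lie in $\Cut(\partial X)$, one takes an integral curve $\sigma(s)$ of $W$ through $\mu_q'(0)$, notes $(d\exp^{\perp})_{\lambda(\sigma(s))\sigma(s)}(\sigma'(s))=0$, and applies \cite[Lemma 1]{IT1} to conclude $\exp^{\perp}(\lambda(\sigma(s))\sigma(s))\equiv q$, i.e.\ $q$ itself is reached by a one-parameter family of $\partial X$-segments. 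Some argument of this kind (or Bishop's original one from \cite{B}) is needed; without it your proof does not close.
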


\begin{proof}
Suppose that the cut point $q$ admits a unique $\partial X$-segment $\mu_{q}$ to $q$. 
Then, $q$ is the first focal point of $\partial X$ along $\mu_{q}$. 
For each $p \in \partial X$, we denote by $v_{p}$ 
the inward pointing unit normal vector to $\partial X$ at $p \in \partial X$. 
And let $\cU$ be a sufficiently small open neighborhood around 
$d(\partial X, q) \mu'_{q} (0)$ in the normal bundle $\cN_{\partial X}$ of $\partial X$, 
so that there exists a number $\lambda (v_{p}) \in (0, \infty)$ 
such that $\exp^{\perp}(\lambda (v_{p}) v_{p})$ is the first focal point of $\partial X$ 
for each $\lambda (v_{p}) v_{p} \in \cU$. Set $k := \liminf_{v_{p} \to \mu'_{q} (0)} \nu (v_{p})$,
where $\nu (v_{p}) := \dim \ker (d \exp^{\perp})_{\lambda (v_{p}) v_{p}}$. 
Since $\cU$ is sufficiently small, 
we may assume that 
$\nu (v_{p}) \ge k$ on $\cU_{\lambda} := \left\{ w / \| w\| \, | \, w \in \cU \right\}$, 
which is open in the unit sphere normal bundle of $\partial X$. 
It is clear that, for each integer $m \ge 0$, the set 
$
\left\{
v_{p} \in \cU_{\lambda} \, | \, \ra (d \exp^{\perp})_{\lambda (v_{p}) v_{p}} \ge m
\right\}
$
is open in $\cU_{\lambda}$. 
Hence, by \cite[Lemma 1]{IT2}, 
$\lambda$ is smooth on the open set  
$
\left\{
v_{p} \in \cU_{\lambda} \, | \, \nu (v_{p}) \le k
\right\}
= 
\left\{
v_{p} \in \cU_{\lambda} \, | \, \nu (v_{p}) = k
\right\} \subset \cU_{\lambda}$. Since 
$
(d \exp^{\perp})_{\lambda (v_{p}) v_{p}} : 
T_{\lambda (v_{p}) v_{p}} \, \cN_{\partial X} \lra T_{ \exp^{\perp} (\lambda (v_{p})v_{p})} X
$
is a linear map depending smoothly on $v_{p} \in \cU_{\lambda}$, 
there exists a {\bf non-zero} vector field $W$ on $\cU_{\lambda}$ such that 
$W_{v_{p}} \in \ker (d \exp^{\perp})_{\lambda (v_{p}) v_{p}}$ on $\cU_{\lambda}$. 
Here, we assume that 
$\ker (d \exp^{\perp})_{\lambda (v_{p}) v_{p}} \subset T_{v_{p}} \cU_{\lambda}$
by the natural identification.\par 
Assume that that there exists a sequence $\{ \mu_{i} : [0, \ell_{i}] \lra X \}$ of $\partial X$-segments 
convergent to $\mu_{q}$ such that $\mu_{i} (\ell_{i}) \in \Cut (\partial X)$ and 
$\mu_{i} (\ell_{i}) \not\in \Focal (\partial X)$ along $\mu_{i}$. 
Then it is clear that each $\mu_{i} (\ell_{i})$ admits at least two $\partial X$-segments. 
Hence, we have proved our lemma in this case.\par
Assume that 
$\exp^{\perp} (\lambda (v_{p}) v_{p}) \in \Cut (\partial X)$ for all $v_{p} \in \cU_{\lambda}$. 
Let $\sigma (s)$, $s \in (- \delta, \delta)$, be the local integral curve of $W$ on $\cU_{\lambda}$ with 
$ \mu_{q}'(0) = \sigma (0)$. 
Hence, 
$(d \exp^{\perp})_{\lambda (\sigma (s)) \sigma (s)} (\sigma' (s)) = 0$ on $(- \delta, \delta)$. 
By \cite[Lemma 1]{IT1}, 
$\exp^{\perp} (\lambda (\sigma (s)) \sigma (s)) 
= \exp^{\perp} (\lambda (\sigma (0)) \sigma (0)) = q$ holds. 
Hence $q$ is a point in $\Cut (\partial X)$ admitting at least two $\partial X$-segments.
$\qedd$
\end{proof}

\begin{remark}
Lemma \ref{lem2009-03-03} holds without curvature assumption on $(X, \partial X)$. 
\end{remark}

\begin{proposition}\label{prop6.4}
Let $\mu_{0} :[0, \infty) \lra X$ be a $\partial X$-ray guaranteed by the assumption above. 
If $(\wt{X}, \partial \wt{X})$ satisfies 
\begin{equation}\label{prop6.4-con1}
\int_{0}^{\infty} \frac{1}{m (t)^{2}}dt = \infty,
\end{equation}
or 
\begin{equation}\label{prop6.4-con2}
\liminf_{t \to \infty} m(t) = 0,
\end{equation}
then, any point of $X$ lies in a unique $\partial X$-ray. 
In particular, $\partial X$ is totally geodesic in the case where (\ref{prop6.4-con1}) is satisfied.
\end{proposition}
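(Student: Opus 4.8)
The plan is to deduce the whole proposition from the single assertion that $\Cut(\partial X)\cap(X\setminus\partial X)=\emptyset$, and to prove that assertion by contradiction using Theorem \ref{weak}, Lemma \ref{lem6.3}, and the hypotheses on $m$. First suppose there are no interior cut points. Then a $\partial X$-segment $\mu:[0,\ell]\lra X$ with $\mu(\ell)\notin\Cut(\partial X)$ extends properly to a longer $\partial X$-segment, and such an extension never meets $\partial X$; since its non-endpoints are interior, a standard continuation argument shows that it continues to a $\partial X$-ray. Consequently every boundary point is the foot of a $\partial X$-ray (extend the inward normal geodesic) and every interior point lies on one (extend a minimal $\partial X$-segment to it). For uniqueness: two $\partial X$-rays through an interior point $x$ restrict on $[0,d(\partial X,x)]$ to minimal $\partial X$-segments to $x$, which must be equal (else $x\in\Cut(\partial X)$), so the rays have the same velocity at $x$ and coincide; two $\partial X$-rays through a boundary point both leave along the inward unit normal there, hence coincide. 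Finally, when (\ref{prop6.4-con1}) holds, Lemma \ref{lem6.2} applied to the $\partial X$-ray through any $p\in\partial X$ shows $p$ is a geodesic point, so $\partial X$ is totally geodesic.

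So it suffices to rule out an interior cut point. Assume one exists; by Lemma \ref{lem2009-03-03} we may choose $q\in\Cut(\partial X)\cap(X\setminus\partial X)$ admitting two distinct $\partial X$-segments $\mu_{1},\mu_{2}:[0,\ell]\lra X$, $\mu_{i}(\ell)=q$, $\ell:=d(\partial X,q)$. As $\mu_{1}\neq\mu_{2}$ forces $\mu_{1}'(\ell)\neq\mu_{2}'(\ell)$, the angle $\beta:=\angle(\mu_{1}'(\ell),\mu_{2}'(\ell))$ lies in $(0,\pi]$. Let $\mu_{0}$ be the given $\partial X$-ray, and for large $t$ pick a minimal geodesic $\gamma_{t}$ from $q$ to $\mu_{0}(t)$, so that $t-\ell\le L(\gamma_{t})=d(q,\mu_{0}(t))\le t+C$ with $C:=\ell+d(\mu_{1}(0),\mu_{0}(0))$. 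For $i=1,2$ form the open triangle ${\rm OT}(\partial X,q,\mu_{0}(t))$ with sides $\gamma_{t},\mu_{i},\mu_{0}|_{[0,t]}$; its angle at $q$ is $\theta_{i}(t):=\angle(\gamma_{t}'(0),-\mu_{i}'(\ell))$. By Theorem \ref{weak} there is a generalized open triangle in $\wt{X}$ with vertices $\wh{p}_{i},\wh{q}_{i}$ at $\partial\wt{X}$-distances $\ell$ and $t$, with opposite side $\wh{\gamma}_{i}$ of length $L(\wh{\gamma}_{i})\le L(\gamma_{t})\le t+C$, and with $\theta_{i}(t)\ge\angle\,\wh{p}_{i}$.

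The core step is to prove $\angle\,\wh{p}_{i}\to\pi$ as $t\to\infty$. The domain $\cD_{i}$ constructed in the proof of Theorem \ref{weak} is locally convex by (\ref{weak-6}), and $\gamma_{t}$ (hence $\wh{\gamma}_{i}$) stays off $\partial\wt{X}$; therefore the shortest arc $\wh{\gamma}_{i}$ does not run along the boundary broken geodesic of $\cD_{i}$ but is a genuine geodesic segment of $\wt{X}$ joining the $\tilde{x}$-level $\ell$ to the $\tilde{x}$-level $t$. Let $\nu_{i}$ be its Clairaut constant. Under (\ref{prop6.4-con1}), Lemma \ref{lem6.3} gives
\[
t+C \ge L(\wh{\gamma}_{i}) \ge (t-\ell)+\frac{\nu_{i}^{2}}{2}\int_{\ell}^{t}\frac{ds}{m(s)\sqrt{m(s)^{2}-\nu_{i}^{2}}} \ge (t-\ell)+\frac{\nu_{i}^{2}}{2}\int_{\ell}^{t}\frac{ds}{m(s)^{2}},
\]
and $\int_{\ell}^{\infty}ds/m(s)^{2}=\infty$ forces $\nu_{i}\to0$; under (\ref{prop6.4-con2}) the same follows because, by (\ref{geodesic-eq}), a geodesic with Clairaut constant $\nu_{i}$ cannot reach $\tilde{x}$-levels where $m<\nu_{i}$, and such levels are arbitrarily large. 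Since $\nu_{i}=m(\ell)\sin\angle(\wh{\gamma}_{i}'(0),\partial/\partial\tilde{x})$, we get $\sin\angle(\wh{\gamma}_{i}'(0),\partial/\partial\tilde{x})\to0$, and the bounded length of $\wh{\gamma}_{i}$ prevents it from first descending toward $\partial\wt{X}$; hence $\angle(\wh{\gamma}_{i}'(0),\partial/\partial\tilde{x})\to0$ and $\angle\,\wh{p}_{i}\to\pi$.

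It follows that $\theta_{1}(t),\theta_{2}(t)\to\pi$. But for any unit vector $w\in T_{q}X$, the triangle inequality on the unit sphere in $T_{q}X$ (passing through the antipode of $-\mu_{1}'(\ell)$) gives
\[
\angle(w,-\mu_{1}'(\ell))+\angle(w,-\mu_{2}'(\ell)) \le 2\pi-\angle(-\mu_{1}'(\ell),-\mu_{2}'(\ell)) = 2\pi-\beta ,
\]
so $\theta_{1}(t)+\theta_{2}(t)\le 2\pi-\beta<2\pi$ for every $t$, contradicting $\theta_{1}(t)+\theta_{2}(t)\to2\pi$. This contradiction yields $\Cut(\partial X)\cap(X\setminus\partial X)=\emptyset$ and hence the proposition. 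I expect the main obstacle to be the core step: verifying carefully that $\wh{\gamma}_{i}$ is a true geodesic of $\wt{X}$ rather than part of $\partial\cD_{i}$, and then extracting $\nu_{i}\to0$ and $\angle\,\wh{p}_{i}\to\pi$ uniformly under both hypotheses (\ref{prop6.4-con1}) and (\ref{prop6.4-con2}), including ruling out that $\wh{\gamma}_{i}$ initially dips toward $\partial\wt{X}$.
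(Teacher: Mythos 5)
Your argument is correct in substance and rests on exactly the same machinery as the paper's proof: triangles $(\partial X,q,\mu_{0}(t))$ compared via the weak Toponogov theorem, the length estimate of Lemma \ref{lem6.3} (under (\ref{prop6.4-con1})) or the endpoint Clairaut bound $\nu\le m(t_{i})$ (under (\ref{prop6.4-con2})) to force the Clairaut constant to $0$, hence the comparison angle at the vertex over $q$ to $\pi$, plus Lemma \ref{lem2009-03-03} and Lemma \ref{lem6.2}. The only real difference is the logical packaging: the paper argues directly, showing for an \emph{arbitrary} interior point $q$ and \emph{any} $\partial X$-segment $\mu_{1}$ to it that the limit $\gamma_{\infty}=\lim\gamma_{t}$ makes angle $\pi$ with $-\mu_{1}'$, so $q$ has a unique $\partial X$-segment, and only then invokes Lemma \ref{lem2009-03-03} to conclude $\Cut(\partial X)\cap(X\setminus\partial X)=\emptyset$; you run the contrapositive, using Lemma \ref{lem2009-03-03} to produce a two-segment cut point and getting a contradiction from the spherical angle-sum bound $\theta_{1}(t)+\theta_{2}(t)\le 2\pi-\beta$, which is a clean substitute for the paper's limit-geodesic step and buys nothing more but also loses nothing. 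One caveat on the step you yourself flag: ``the bounded length of $\wh{\gamma}_{i}$ prevents it from first descending'' is not by itself a valid reason --- a dip from level $\ell$ down to level $x_{\min}$ and back up to $t$ has length about $t+\ell-2x_{\min}$, which is compatible with the bound $t+C$, $C=\ell+d(\mu_{1}(0),\mu_{0}(0))$. The correct way to exclude the initial descent (needed to turn $\sin\angle(\wh{\gamma}_{i}'(0),\partial/\partial\tilde{x})\to 0$ into $\angle\,\wh{p}_{i}\to\pi$) is the turning-level argument you already use under (\ref{prop6.4-con2}): once $\nu_{i}<\min_{[0,\ell]}m$, a geodesic of $\wt{X}$ starting at level $\ell$ with $\tilde{x}'<0$ can never satisfy $\tilde{x}'=0$ before reaching $\partial\wt{X}$, and it cannot cross or run into $\partial\wt{X}$ while staying in $\wt{X}$ and still reach level $t$; hence for large $t$ the initial direction points away from $\partial\wt{X}$ and the angle tends to $0$, not $\pi$. (The paper is silent on this point too, simply citing (\ref{clairaut}), so this is a shared ellipsis rather than a defect of your route; also note that the fact that $\wh{\gamma}$ is a genuine geodesic of $\wt{X}$ is part of the definition of a generalized open triangle, so you may cite Theorem \ref{weak} for it rather than re-examining the domain $\cD$.)
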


\begin{proof}
Choose any point $q \in X \setminus \partial X$ not lying on $\mu_{0}$. 
Let $\mu_{1} : [0, d(\partial X, q)] \lra X$ denote a $\partial X$-segment with 
$\mu_{1} (d(\partial X, q)) = q$. 
For each $t > 0$, let $\gamma_{t} :[0, d(q, \mu_{0}(t))]\lra X$ denote a minimal geodesic segment 
emanating from $q$ to $\mu_{0} (t)$. 
From Theorem \ref{weak} and the triangle inequality, 
it follows that there exists a generalized open triangle 
\[
{\rm GOT}(\partial \wt{X}, \wh{\mu}_{0}(t), \wh{q}\,) 
= 
(\partial \wt{X}, \wh{\mu}_{0}(t), \wh{q}\,;\,\wh{\gamma}_{t}, \wh{\mu}_{0}^{(t)}, \wh{\mu}_{1})
\] 
in $\wt{X}$ corresponding to the triangle 
${\rm OT}(\partial X, \mu_{0}(t), q) = (\partial X, \mu_{0}(t), q\,;\,\gamma_{t}, \mu_{0}|_{[0, \,t]}, \mu_{1})$ in $X$ such that 
\begin{equation}\label{prop6.4-length1}
d(\partial \wt{X}, \wh{\mu}_{0}(t)) = t, \quad 
d(\partial \wt{X},\wh{q}\,) = d(\partial X, q), 
\end{equation}
and
\begin{equation}\label{prop6.4-length2}
L(\,\wh{\gamma}_{t}\,) \le d(\mu_{0}(t), q) \le t + d(q, \mu_{0}(0)) 
\end{equation}
and that
\begin{equation}\label{prop6.4-angle}
\angle(\partial X, q, \mu_{0} (t)) 
\ge 
\angle(\partial \wt{X}, \wh{q}, \wh{\mu}_{0} (t)).
\end{equation}
Here $\angle(\partial X, q, \mu_{0} (t))$ denotes 
the angle between two sides $\mu_{1}$ and $\gamma_{t}$ joining $q$ 
to $\partial X$ and $\mu_{0} (t)$ forming the triangle ${\rm OT}(\partial X, \mu_{0}(t), q)$. 
From Lemma \ref{lem6.3}, (\ref{prop6.4-length1}), and (\ref{prop6.4-length2}), 
we get 
\begin{align}\label{prop6.4-1}
t + d(q, \mu_{0}(0)) 
&\ge 
L(\,\wh{\gamma}_{t}\,)\notag\\[2mm] 
&\ge 
t - d(\partial X, q) + \frac{\nu_{t}^{2}}{2} \int_{d(\partial X,\, q)}^{t} \frac{1}{m(t)\sqrt{m(t)^{2} -\nu_{t}^{2}}}\,dt.
\end{align}
where $\nu_{t}$ denotes the Clairaut constant of $\wh{\gamma}_{t}$. 
By (\ref{prop6.4-1}), 
\begin{equation}\label{prop6.4-2}
d(\partial X, q) + d(q, \mu_{0}(0)) 
\ge 
\frac{\nu_{t}^{2}}{2} \int_{d(\partial X,\, q)}^{t} \frac{1}{m(t)^{2}}\,dt.
\end{equation}\par
First, assume that $(\wt{X}, \partial \wt{X})$ satisfies (\ref{prop6.4-con1}). 
Then, it is clear from (\ref{prop6.4-2}) that 
$\lim_{t \to \infty} \nu_{t} = 0$. Hence, by (\ref{clairaut}), we have 
\begin{equation}\label{prop6.4-3}
\lim_{t \to \infty} \angle(\partial \wt{X}, \wh{q}, \wh{\mu}_{0} (t)) = \pi.
\end{equation}
By (\ref{prop6.4-angle}) and (\ref{prop6.4-3}), 
$\gamma_{\infty} := \lim_{t \to \infty} \gamma_{t}$ is a ray emanating from $q$ such that 
\[
\angle\,(\gamma'_{\infty}(0), - \mu'_{1}(d(\partial X, q))) = \pi.
\]
This implies that $q$ lies on a unique $\partial X$-segment. 
Therefore, by Lemma \ref{lem2009-03-03}, $q$ lies on a $\partial X$-ray. 
Now, it is clear from Lemma \ref{lem6.2} that $\partial X$ is totally geodesic.\par
Second, assume that $(\wt{X}, \partial \wt{X})$ satisfies (\ref{prop6.4-con2}). 
Then, there exists a divergent sequence $\{ t_{i} \}_{i \in \N}$ such that 
\begin{equation}\label{prop6.4-4}
\lim_{t \to \infty} m(t_{i}) = 0.
\end{equation}
From (\ref{clairaut}), we see 
\begin{equation}\label{prop6.4-5}
\nu_{i} \le m (t_{i}),
\end{equation}
where $\nu_{i}$ denotes the Clairaut constant of $\wh{\gamma}_{t_{i}}$. 
Hence, by (\ref{prop6.4-4}) and (\ref{prop6.4-5}), 
$\liminf_{t \to \infty} \nu_{t} = 0$ holds. Now, 
it is clear that there exist a limit geodesic $\gamma_{\infty}$ of $\{\gamma_{t_{i}}\}$ 
such that $\gamma_{\infty}$ is a ray emanating from $q$ and satisfies 
$\angle\,(\gamma'_{\infty}(0), - \mu'_{1}(d(\partial X, q))) = \pi$. 
Therefore, by Lemma \ref{lem2009-03-03}, $q$ lies on a $\partial X$-ray.
$\qedd$
\end{proof}

\bigskip

By Proposition \ref{prop6.4}, there does not exist a cut point of $\partial X$. 
Therefore, it is clear that 

\begin{corollary}\label{cor6.4}
If $(\wt{X}, \partial \wt{X})$ satisfies (\ref{prop6.4-con1}), or (\ref{prop6.4-con2}), 
then $X$ is diffeomorphic to $[0, \infty) \times \partial X$.
\end{corollary}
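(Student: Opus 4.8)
The plan is to realize the diffeomorphism by the inward normal exponential map of $\partial X$, using the fact --- recorded in the paragraph preceding the corollary --- that Proposition \ref{prop6.4} forces $\Cut(\partial X) = \emptyset$. First I would upgrade this to $\Focal(\partial X) = \emptyset$. Let $\mu : [0, \beta) \to X$ be a maximal unit speed geodesic with $\mu(0) \in \partial X$ and $\mu'(0) = v_{\mu(0)}$, the inward unit normal; put $T := \sup\{\, t \in [0, \beta) : d(\partial X, \mu(t)) = t \,\}$, which is positive by the local structure of $d(\partial X, \cdot)$ near $\partial X$. If $T$ were finite it would satisfy $T < \beta$ (completeness of $X$ rules out $T = \beta$, since along $\mu|_{[0,T)}$ one has $d(\partial X,\mu(t)) = t$ and the limit point would lie off $\partial X$), and then $\mu|_{[0, T]}$ would be a $\partial X$-segment whose endpoint $\mu(T)$ is, by Definition \ref{def_of_cut_locus2011_02_16}, a cut point of $\partial X$ --- contradicting $\Cut(\partial X) = \emptyset$. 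Hence $T = \beta = \infty$, i.e.\ every such $\mu$ is a $\partial X$-ray; and a $\partial X$-ray carries no focal point of $\partial X$, because past the first focal parameter a geodesic issuing perpendicularly from $\partial X$ can no longer realize the distance $d(\partial X, \cdot)$. Thus $\Focal(\partial X) = \emptyset$, and in particular the inward perpendicular geodesics are all defined on $[0,\infty)$.

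Next I would examine the smooth map
\[
E : [0, \infty) \times \partial X \lra X, \qquad E(t, p) := \exp^{\perp}_{p}(t\, v_{p}),
\]
which is well defined by the previous paragraph. Since $\Focal(\partial X) = \emptyset$, the differential $(dE)_{(t, p)}$ is everywhere nonsingular --- including along the boundary $\{0\} \times \partial X$, where $(dE)_{(0,p)}$ sends $\partial_{t}$ to $v_{p}$ and restricts to the identity on $T_{p}\partial X$ --- so $E$ is a local diffeomorphism of manifolds with boundary. It is surjective: a point $q \in X \setminus \partial X$ lies on the $\partial X$-ray $\nu$ through it, and $q = \nu(d(\partial X, q)) = E(d(\partial X, q), \nu(0))$, while $q = E(0, q)$ if $q \in \partial X$. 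It is injective: if $E(t, p) = E(t', p') =: q$, then $s \mapsto E(s, p)$ and $s \mapsto E(s, p')$ are $\partial X$-rays through $q$, hence both coincide with the unique $\partial X$-ray through $q$ supplied by Proposition \ref{prop6.4}; reading off $s = 0$ gives $p = p'$, and comparing the parameter of $q$ along this ray gives $t = t' = d(\partial X, q)$. A bijective local diffeomorphism is a diffeomorphism, so $X$ is diffeomorphic to $[0, \infty) \times \partial X$.

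The genuinely delicate point is the first paragraph --- translating the uniqueness of the $\partial X$-ray through each point into the vanishing of $\Cut(\partial X)$ (implicitly: uniqueness of the ray upgrades to uniqueness and unlimited prolongability of the $\partial X$-segment to each point) and then into the vanishing of $\Focal(\partial X)$ and the completeness of the perpendicular geodesics. Everything afterwards is the standard fact that a normal exponential map with empty cut locus is a global diffeomorphism onto the ambient manifold, so the corollary follows at once.
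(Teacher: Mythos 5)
Your proof is correct and follows the same route as the paper: the paper simply notes that Proposition \ref{prop6.4} forces $\Cut(\partial X)=\emptyset$ and declares the diffeomorphism ``clear'' via the inward normal exponential map (the same map $\Phi(t,p)=\exp^{\perp}(t\,v_{p})$ it uses later), and your argument is just the careful write-up of that standard fact. No discrepancies to report.
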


\bigskip

Furthermore, we may reach stronger conclusion 
than Corollary \ref{cor6.4}\,:

\begin{theorem}\label{prop6.5}
If $(\wt{X}, \partial \wt{X})$ satisfies 
\[
\int_{0}^{\infty} \frac{1}{m (t)^{2}}dt = \infty,
\]
then, for every $\partial X$-ray $\mu : [0, \infty) \lra X$, 
the radial curvature $K_{X}$ satisfies
\begin{equation}\label{prop6.5-con2}
K_{X}(\sigma_{t}) = G (\tilde{\mu}(t))
\end{equation}
for all $t \in [0, \infty)$ and all $2$-dimensional linear space $\sigma_{t}$ spanned by $\mu'(t)$ 
and a tangent vector to $X$ at $\mu(t)$. 
In particular, $X$ is isometric to the warped product manifold 
$[0, \infty ) \times_{m} \partial X$ of $[0, \infty )$ and 
$(\partial X, g_{\partial X})$ with the warping function $m$. 
Here $g_{\partial X}$ denotes the induced Riemannian metric from $X$. 
\end{theorem}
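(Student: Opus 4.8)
The plan is to derive the pointwise curvature identity (\ref{prop6.5-con2}) from the one-dimensional comparison Lemma \ref{lem6.1}, and then to bootstrap it to the asserted isometry by examining $\partial X$-Jacobi fields along $\partial X$-rays.

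For the curvature identity, fix a $\partial X$-ray $\mu : [0, \infty) \lra X$, a parameter $t \ge 0$, and a $2$-plane $\sigma_{t} \subset T_{\mu(t)} X$ containing $\mu'(t)$; write $\sigma_{t} = \mathrm{span}\{\mu'(t), v\}$ with $v \perp \mu'(t)$ and $|v| = 1$, and let $E$ be the parallel unit field along $\mu$ with $E(t) = v$, so that $E \perp \mu'$ everywhere and $E(0) \in T_{\mu(0)}\partial X$. By Proposition \ref{prop6.4} the boundary $\partial X$ is totally geodesic, hence $A_{\mu'(0)} = 0$; therefore, if $f$ solves $f'' + K_{X}(\mu'(s), E(s)) f = 0$ with $f(0) = 1$ and $f'(0) = 0$, the field $Y := fE$ is a nonzero $\partial X$-Jacobi field along $\mu$. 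Since $\mu$ is a $\partial X$-ray there is no focal point of $\partial X$ along $\mu$, so $Y$, and hence $f$, never vanishes; as $f(0) = 1$ this gives $f > 0$ on $[0, \infty)$. Now $K(s) := K_{X}(\mu'(s), E(s)) \ge G(\tilde{\mu}(s)) =: G(s)$ by the radial curvature hypothesis, $m$ solves $m'' + Gm = 0$ with $m(0) = 1$ and $m'(0) = 0$, and $\int_{0}^{\infty} m(t)^{-2}\,dt = \infty$, so (L--1) of Lemma \ref{lem6.1} forces $K \equiv G$; that is, $K_{X}(\sigma_{t}) = G(\tilde{\mu}(t))$, which proves (\ref{prop6.5-con2}) since $\sigma_{t}$ was arbitrary.

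For the isometry, I use Corollary \ref{cor6.4} to identify $X$ with $[0, \infty) \times \partial X$ via the normal exponential map $\Phi(s, p) := \exp^{\perp}(s\, v_{p})$, writing $\mu_{p}(s) := \Phi(s, p)$. The Gauss lemma gives $\Phi^{*} g_{X} = ds^{2} + g_{s}$ with $\{g_{s}\}$ a smooth family of metrics on $\partial X$ and $g_{0} = g_{\partial X}$, and for $w \in T_{p}\partial X$ the variation field $J_{w}(s) := d\Phi_{(s,p)}(w)$ is the $\partial X$-Jacobi field along $\mu_{p}$ with $J_{w}(0) = w$ and $J_{w}'(0) = -A_{v_{p}}(w) = 0$. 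Fix a parallel orthonormal frame $\{E_{i}\}$ along $\mu_{p}$ with $E_{i} \perp \mu_{p}'$. Applying (\ref{prop6.5-con2}) to the planes spanned by $\mu_{p}'$ and $\cos\theta\, E_{i} + \sin\theta\, E_{j}$ and cancelling the equal diagonal terms $\langle R(\mu_{p}', E_{i})\mu_{p}', E_{i}\rangle = G(\tilde{\mu}(s))$ shows that the off-diagonal terms $\langle R(\mu_{p}', E_{i})\mu_{p}', E_{j}\rangle$ all vanish, so $v \mapsto R(\mu_{p}'(s), v)\mu_{p}'(s)$ equals $G(\tilde{\mu}(s))\,\mathrm{Id}$ on $(\mu_{p}'(s))^{\perp}$. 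Hence each component of $J_{w}$ in the frame $\{E_{i}\}$ satisfies $h'' + G(\tilde{\mu}(s))h = 0$ with vanishing derivative at $s = 0$, so it equals $m(s)$ times its initial value; thus $J_{w}(s) = m(s)\, P_{s}(w)$, where $P_{s}$ denotes parallel transport along $\mu_{p}$. Consequently $g_{s}(w_{1}, w_{2}) = m(s)^{2} g_{0}(w_{1}, w_{2})$ for all $p$ and all $w_{1}, w_{2} \in T_{p}\partial X$, i.e. $\Phi^{*} g_{X} = ds^{2} + m(s)^{2} g_{\partial X}$, which is exactly the metric of $[0, \infty) \times_{m} \partial X$.

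I expect the first part to be essentially immediate once the totally geodesic boundary (Proposition \ref{prop6.4}) and the absence of focal points along a $\partial X$-ray are in hand, the curvature equality then being a direct appeal to Lemma \ref{lem6.1}. The main obstacle should be the second part: one must verify carefully that the off-diagonal curvature terms vanish, so that the $\partial X$-Jacobi fields decouple over a parallel orthonormal frame, and that this decoupling together with the totally geodesic initial condition pins down $g_{s} = m(s)^{2} g_{0}$ exactly — the care being mostly in tracking the frame and the $\partial X$-Jacobi boundary conditions.
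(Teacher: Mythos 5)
Your argument for the curvature identity follows the paper's own route (Lemma \ref{lem6.1}\,(L--1), the totally geodesic boundary from Proposition \ref{prop6.4}, and the absence of focal points along a $\partial X$-ray), merely arranged directly rather than by contradiction; but one step is justified incorrectly. The field $Y:=fE$, with $E$ parallel and $f''+K_X(\mu',E)f=0$, is \emph{not} in general a $\partial X$-Jacobi field: that would require $E(t)$ to be an eigenvector of the curvature operator $v\mapsto R(\mu'(t),v)\mu'(t)$ for every $t$, i.e.\ that the off-diagonal terms $\langle R(\mu',E)\mu',F\rangle$ vanish --- which is precisely the kind of statement only available \emph{after} the curvature equality is proved. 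Hence "no focal point along $\mu$, so $Y$ never vanishes" does not follow as you state it. The correct (and standard) justification is the index-form comparison used in the paper: if $f(t_1)=0$ for some $t_1>0$, then since $f(0)=1$, $f'(0)=0$ and $A_{\mu'(0)}=0$ one computes $\mathcal{I}_{\partial X}^{t_1}(fE,fE)=\int_0^{t_1}\frac{d}{dt}(ff')\,dt=0$, contradicting the strict positivity of $\mathcal{I}_{\partial X}^{t_1}$ on nonzero admissible fields vanishing at $t_1$, which holds because a $\partial X$-ray carries no focal point of $\partial X$ (cf.\ Lemma 2.9 in \cite[Chapter III]{S}). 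With that repair, your deduction $f>0$ on $[0,\infty)$ and then $K\equiv G$ via (L--1) is exactly the content of the paper's contradiction argument.

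Your second part is correct and is in fact more detailed than the paper, which disposes of the isometry with "it is clear": the polarization argument showing $R(\mu',\cdot)\mu'=G(\tilde\mu(s))\,\mathrm{id}$ on $(\mu')^{\perp}$, the identification of $d\Phi_{(s,p)}(w)$ with the $\partial X$-Jacobi field $J_w$ satisfying $J_w(0)=w$, $J_w'(0)=-A_{v_p}(w)=0$, the resulting decoupling $J_w(s)=m(s)P_s(w)$ in a parallel frame, and the conclusion $\Phi^{*}g_X=ds^{2}+m(s)^{2}g_{\partial X}$ are all sound (and note that only at this stage, after the curvature equality, do fields of the form $m\,E$ become genuine Jacobi fields --- which underlines why the shortcut in your first part is not available there).
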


\begin{proof}
Take any point $p \in \partial X$, and fix it. 
By Proposition \ref{prop6.4}, 
we may take a $\partial X$-ray $\mu :[0, \infty) \lra X$ emanating from $p = \mu(0)$. 
Suppose that 
\begin{equation}\label{prop6.5-2009-01-18-1}
K_{X}(\sigma_{t_{0}}) > G (\tilde{\mu}(t_{0}))
\end{equation}
for some linear plane $\sigma_{t_{0}}$ spanned by $\mu'(t_{0})$ 
and a unit tangent vector $v_{0}$ orthogonal to $\mu'(t_{0})$. 
If we denote by $E(t)$ the parallel vector field along $\mu$ satisfying $E(t_{0}) = v_{0}$, 
then $E(t)$ is unit and orthogonal to $\mu'(t_{0})$ for each $t$. 
We define a non-zero vector field $Y(t)$ along $\mu$ by 
$Y(t) := f(t) E(t)$, 
where $f$ is the solution of the following differential equation 
\begin{equation}\label{prop6.5-2009-01-18-2}
f''(t) + K_{X} (\mu'(t), E(t)) f(t) = 0
\end{equation}
with initial condition $f(0) = 1$ and $f'(0) = 0$. 
Here $K_{X} (\mu'(t), E(t))$ denotes the sectional curvature of the plane 
spanned by $\mu'(t)$ and $E(t)$. 
It follows from (\ref{prop6.5-2009-01-18-1}) and (L--1) in Lemma \ref{lem6.1} that 
there exists $t_{1} > 0$ such that 
$f(t_{1}) = 0$. 
From (\ref{prop6.5-2009-01-18-2}), 
we get 
\begin{equation}\label{prop6.5-2009-01-18-3}
I_{t_{1}} (Y, Y) 
= 
\int^{t_{1}}_{0} 
\frac{d}{dt} (ff')
dt 
= 
0.
\end{equation}
Since $\partial X$ is totally geodesic by Proposition \ref{prop6.4}, $A_{\mu'(0)} (E(0)) = 0$. 
Thus, by (\ref{prop6.5-2009-01-18-3}), 
$\cI_{\partial X}^{t_{1}} (Y, Y) = 0$ holds. 
On the other hand, 
$\cI_{\partial X}^{t_{1}} (Y, Y) > 0$ holds, 
since there is no focal point of $\partial X$ along $\mu$. 
This is a contradiction. 
Therefore, we get the first assertion (\ref{prop6.5-con2}).\par
Now it is clear that the map $\varphi : [0, \infty ) \times_{m} \partial X \lra X$ 
defined by 
$\varphi (t, q) := \exp^{\perp} (t v_{q})$ 
gives an isometry from $[0, \infty ) \times_{m} \partial X$ onto $X$. 
Here $v_{q}$ denotes the inward pointing unit normal vector to $\partial X$ at $q \in \partial X$.
$\qedd$
\end{proof}

\section{Proof of Theorem \ref{thm}}\label{sec:app2}
Throughout this section, 
let $(X, \partial X)$ be a complete connected Riemannian manifold $X$ 
with {\bf disconnected} smooth compact {\bf convex} boundary $\partial X$ 
whose radial curvature is bounded from below by $0$. 
Under the hypothesis, we may assume
\[
\partial X = \bigcup_{i = 1}^{k} \partial X_{i}, \quad k \ge 2.
\]
Here each $\partial X_{i}$ denotes a connected component of $\partial X$ and is compact. 
Set 
\[
\ell := \min \{d(\partial X_{i}, \partial X_{j}) \,|\, 1 \le i, j \le k, i \not= j \}.
\] 
Then let $\partial X_{1}, \partial X_{2}$ denote the connected components of $\partial X$ satisfying 
\[
d(\partial X_{1}, \partial X_{2}) = \ell.
\]

The proof of the next lemma is standard: 

\begin{lemma}\label{lem8.1}
Let $\mu$ denote a minimal geodesic segment in $X$ emanating from $\partial X_{1}$ to 
$\partial X_{2}$. 
Then, there does not exist any other $\partial X$-segment 
to $\mu(\ell / 2)$ than $\mu|_{[0,\,\ell / 2]}$ and $\mu|_{[\ell / 2,\,\ell]}$. 
Furthermore, each midpoint $\mu(\ell / 2)$ is not a focal point of $\partial X$ 
along $\mu$. 
\end{lemma}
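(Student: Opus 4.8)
The plan is to argue purely at the level of distances and classical focal point theory, without invoking the comparison theorems. First I would record the behaviour of $d(\partial X,\cdot)$ along $\mu$. Writing $\mu:[0,\ell]\to X$ with $\mu(0)\in\partial X_{1}$, $\mu(\ell)\in\partial X_{2}$ and $L(\mu)=\ell$, the triangle inequality together with $\ell=\min_{i\neq j}d(\partial X_{i},\partial X_{j})$ forces $d(\partial X_{1},\mu(t))=t$ and $d(\partial X_{2},\mu(t))=\ell-t$ for all $t\in[0,\ell]$, while $d(\partial X_{j},\mu(t))\geq\max\{t,\ell-t\}$ for $j\neq 1,2$; hence $d(\partial X,\mu(t))=\min\{t,\ell-t\}$. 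In particular $\mu|_{[0,\ell/2]}$ and the reversal of $\mu|_{[\ell/2,\ell]}$ are $\partial X$-segments to $\mu(\ell/2)$, every $\partial X$-segment to $\mu(\ell/2)$ has length $\ell/2$, and $\mu(\ell/2)\notin\partial X$.

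For the uniqueness statement, let $\sigma:[0,\ell/2]\to X$ be any $\partial X$-segment to $\mu(\ell/2)$, emanating from some component $\partial X_{m}$. If $m\neq 2$, then the concatenation of $\sigma$ with $\mu|_{[\ell/2,\ell]}$ is a curve of length $\ell$ from $\partial X_{m}$ to $\partial X_{2}$; since $d(\partial X_{m},\partial X_{2})\geq\ell$ by the choice of $\ell$, this curve realizes $d(\partial X_{m},\partial X_{2})$, hence is a smooth geodesic at the interior point $\mu(\ell/2)$, so $\sigma'(\ell/2)=\mu'(\ell/2)$ and therefore $\sigma=\mu|_{[0,\ell/2]}$. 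Symmetrically, if $m\neq 1$, concatenating $\sigma$ with the reversal of $\mu|_{[0,\ell/2]}$ gives a curve realizing $d(\partial X_{m},\partial X_{1})=\ell$, smooth at $\mu(\ell/2)$, whence $\sigma'(\ell/2)=-\mu'(\ell/2)$ and $\sigma$ is the reversal of $\mu|_{[\ell/2,\ell]}$. Since $m$ cannot be both $1$ and $2$, at least one case applies, and $\sigma$ is one of the two prescribed $\partial X$-segments.

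For the focal point assertion, suppose toward a contradiction that $\mu(\ell/2)$ is a focal point of $\partial X$ along $\mu$ (which, since $\mu(0)\in\partial X_{1}$, means a focal point of $\partial X_{1}$ along $\mu|_{[0,\ell/2]}$; the role of $\partial X_{2}$ along the reversed half is handled by the same argument, and by the previous paragraph these exhaust the $\partial X$-segments to $\mu(\ell/2)$). By the classical focal point theory for a geodesic emanating orthogonally from a submanifold (see, e.g., \cite[Chapter III]{S}), the presence of a focal point of $\partial X_{1}$ in the interior of $\mu|_{[0,\ell/2+\varepsilon]}$ implies, for every small $\varepsilon>0$, that $\mu|_{[0,\ell/2+\varepsilon]}$ is not minimizing among curves from $\partial X_{1}$ to $\mu(\ell/2+\varepsilon)$; thus $d(\partial X_{1},\mu(\ell/2+\varepsilon))<\ell/2+\varepsilon$. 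Combining this with $d(\mu(\ell/2+\varepsilon),\partial X_{2})\leq\ell/2-\varepsilon$ yields $d(\partial X_{1},\partial X_{2})<\ell$, contradicting $d(\partial X_{1},\partial X_{2})=\ell$.

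There is no serious obstacle: the only nonelementary input is the standard fact that an orthogonal geodesic from a submanifold ceases to minimize once it passes a focal point, and the only point requiring care is the bookkeeping in the case distinction of the uniqueness part, namely checking that for each component $\partial X_{m}$ from which a competing $\partial X$-segment could emanate, the concatenation with a suitable half of $\mu$ genuinely realizes a minimal connection between two \emph{distinct} boundary components (so that minimality, hence smoothness at $\mu(\ell/2)$, can be invoked).
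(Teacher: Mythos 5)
Your proof is correct: the distance bookkeeping along $\mu$, the shortcut/smoothness argument at the interior junction $\mu(\ell/2)$ forcing any competing $\partial X$-segment to coincide with one of the two halves, and the loss of minimality past a focal point contradicting $d(\partial X_1,\partial X_2)=\ell$ are exactly what is needed. The paper itself omits the proof, stating only that it is standard, and your argument is precisely that standard argument, so there is nothing to reconcile beyond noting that your write-up supplies the details the authors left out.
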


\bigskip

Hereafter, the half plane
\[
\R^{2}_{+} := \{\tilde{p} \in \R^{2} \,|\, \tilde{x}(\tilde{p}) \ge 0 \}
\]
with Euclidean metric $d\tilde{x}^{2} + d\tilde{y}^{2}$ will be used as 
the model surface for $(X, \partial X)$. 

\begin{lemma}\label{lem8.2}
Any point in $X$ lies on a minimal geodesic segment emanating 
from $\partial X_{1}$ to $\partial X_{2}$ of length $\ell$. 
In particular, $\partial X$ consists of $\partial X_{1}$ and $\partial X_{2}$.
\end{lemma}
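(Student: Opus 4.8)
The plan is to show that for an arbitrary point $q \in X \setminus \partial X$ the distance $d(\partial X_1, q) + d(\partial X_2, q)$ equals $\ell$, since then $q$ lies on the concatenation of $\partial X_i$-segments realizing these two distances, and that concatenation must be a single minimal geodesic segment of length $\ell$ from $\partial X_1$ to $\partial X_2$. The key is a Toponogov estimate in the flat model $\R^2_+$. Fix $q$, let $\mu_1 : [0, d(\partial X_1, q)] \lra X$ and $\mu_2 : [0, d(\partial X_2, q)] \lra X$ be $\partial X$-segments from $\partial X_1$ and $\partial X_2$ to $q$, and let $\mu : [0, \ell] \lra X$ be a minimal segment from $\partial X_1$ to $\partial X_2$ realizing $\ell$. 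First I would form the open triangle ${\rm OT}(\partial X, q, \mu(\ell))$ with opposite side a minimal geodesic from $q$ to $\mu(\ell) \in \partial X_2$; but $\mu(\ell)$ lies on $\partial X$, so instead one works with ${\rm OT}(\partial X, q, \mu(\ell/2))$, or more robustly applies Theorem \ref{thm4.9} (or Theorem \ref{weak}) to the open triangle ${\rm OT}(\partial X, q, \mu(t))$ for $\mu(t)$ an interior point of $\mu$, then lets $t \to \ell$.

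Concretely, for $t \in (0,\ell)$ let $\gamma_t$ be a minimal geodesic from $q$ to $\mu(t)$ and consider ${\rm OT}(\partial X, q, \mu(t))$ with sides $\mu_1$ (to $q$) and $\mu|_{[0,t]}$ (to $\mu(t)$, which is a $\partial X$-segment since $\mu$ is minimizing to $\partial X_2$ hence to $\partial X$). Since the radial curvature is bounded below by $0$, the model is $\R^2_+$, and any sector of $\R^2_+$ has no pair of cut points, Theorem \ref{thm4.9} produces a comparison triangle in $\R^2_+$ with the same side lengths $d(\partial X, q)$, $t$, $d(q, \mu(t))$ and with $\angle\,\tilde q \le \angle(\partial X, q, \mu(t))$. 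The angle at $q$ in the manifold triangle is, by Lemma \ref{lem8.1} and the hypothesis that $\partial X_1, \partial X_2$ realize $\ell$, controlled: one shows $\angle(\partial X, q, \mu(t)) \to$ the angle between $-\mu_1'(d(\partial X, q))$ and a limit ray $\gamma_\infty'(0)$ as $t \to \ell$. In $\R^2_+$ a flat computation with the two perpendicular segments $\tilde\mu_1$ and $\tilde\mu$ from $\partial\R^2_+$ to $\tilde q$ and $\tilde\mu(\ell)$ forces: if the comparison angle at $\tilde q$ is $\pi$ then $\tilde q$, and hence $q$, lies on a minimal segment of length $d(\partial X,q)+$ (distance to the other boundary component) $= \ell$; and by the rigidity clause of Theorem \ref{thm4.9} (equality of boundary-foot distances forces equality of angles) combined with $d(\partial X_1,\partial X_2)=\ell$ being the minimum, the comparison angle must indeed reach $\pi$ in the limit.

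The main obstacle I expect is the limiting argument as $t \to \ell$: one must verify that the minimal geodesics $\gamma_t$ from $q$ to $\mu(t)$ subconverge to a minimal geodesic from $q$ to $\mu(\ell) \in \partial X_2$ that is in fact a $\partial X_2$-segment, and that the concatenation $\mu_1 * \overline{\gamma_\infty}$ is a $\partial X$-segment of total length $\ell$ with no corner at $q$ — i.e.\ the first-variation/angle $= \pi$ condition genuinely upgrades to the broken geodesic being a single geodesic. This is where Lemma \ref{lem8.1} (uniqueness of $\partial X$-segments to the midpoint, absence of focal points there) and the minimality defining $\ell$ must be combined: if $d(\partial X_1,q) + d(\partial X_2,q) > \ell$ for some $q$, the flat comparison would yield a broken geodesic from a point of $\partial X_1$ to a point of $\partial X_2$ of length $< d(\partial X_1,q)+d(\partial X_2,q)$ but still $\ge \ell$ with a genuine corner, and shortening the corner contradicts either minimality of the $\partial X_i$-segments or the definition of $\ell$. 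Once every $q$ is shown to lie on such a length-$\ell$ minimal segment from $\partial X_1$ to $\partial X_2$, no point of $X$ can lie on a boundary component other than $\partial X_1$ or $\partial X_2$ — any such point would itself be an endpoint of such a segment, forcing that component to coincide with $\partial X_1$ or $\partial X_2$ — so $\partial X = \partial X_1 \cup \partial X_2$, completing the proof. $\qedd$
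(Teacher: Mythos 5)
Your proposal has a genuine gap, and in fact the global strategy cannot work as stated. First, a technical obstruction: in the open triangle ${\rm OT}(\partial X, q, \mu(t))$ you want the side $\mu|_{[0,t]}$ to be a $\partial X$-segment, but it is one only for $t \le \ell/2$. Indeed $d(\partial X_1,\mu(t)) = t$ and $d(\partial X_2,\mu(t)) = \ell - t$, so for $t > \ell/2$ the nearest boundary point to $\mu(t)$ lies on $\partial X_2$ and $d(\partial X,\mu(t)) = \ell - t < t$; hence exactly in the regime $t \to \ell$ that your argument needs, the triangles are not admissible for Theorem \ref{thm4.9} or Theorem \ref{weak} (their sides must be $\partial X$-segments). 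Second, and more seriously, the claim that the (comparison) angle at $q$ tends to $\pi$ is false for a general point $q$: already in the model case $X = [0,\ell]\times \partial X_1$, if $q$ is far from the fixed segment $\mu$ in the $\partial X_1$-direction, the minimal geodesics $\gamma_t$ from $q$ to $\mu(t)$ stay nearly horizontal and the angle with $-\mu_1'$ at $q$ remains bounded away from $\pi$ for all $t \le \ell$. The mechanism you are imitating is the one in Proposition \ref{prop6.4}, where the Clairaut constant is forced to $0$ because $t \to \infty$; with $t$ bounded by $\ell$ there is no such forcing. Finally, your closing ``shorten the corner'' step yields no contradiction: a broken geodesic through $q$ of length $d(\partial X_1,q)+d(\partial X_2,q) > \ell$ with a corner is perfectly compatible with the definition of $\ell$ and with minimality of each $\eta_i$, so the inequality $d(\partial X_1,q)+d(\partial X_2,q)=\ell$ for \emph{arbitrary} $q$ is not obtained.

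The paper avoids all of this by a connectedness argument: let $\cO$ be the set of points lying on a length-$\ell$ minimal segment from $\partial X_1$ to $\partial X_2$; closedness is trivial, and openness is proved \emph{locally at the midpoint}. For $r \in \cO$ on such a segment $\mu_1$ with midpoint $p = \mu_1(\ell/2)$, one takes a nearby point $q$ on the equidistant set $S$ (which near $p$ lies in $\Cut(\partial X)$ by Lemma \ref{lem8.1}), chooses the minimal geodesic $\gamma$ from $p$ to $q$ so that the angle at $p$ with one of the two subsegments of $\mu_1$ is $\le \pi/2$, and applies Theorem \ref{thm4.9} with model $\R^2_+$ to ${\rm OT}(\partial X_1, p, q)$: since $d(\partial\,\R^2_+,\tilde p)=\ell/2$ and $\angle\,\tilde p \le \pi/2$, flatness of the model gives $d(\partial X_1,q) \le \ell/2$, while the broken curve $\eta_1 * \eta_2$ gives $d(\partial X_1,q) \ge \ell/2$; equality makes $\eta_1 * \eta_2$ a length-$\ell$ minimizer through $q$, and absence of focal points (Lemma \ref{lem8.1}) lets these segments sweep out a neighborhood of $r$. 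It is this use of the equidistant set and the right angle at the midpoint --- for which you have no substitute at an arbitrary point $q$ --- that makes the comparison bite; your proposal would need to be restructured along these lines rather than patched.
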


\begin{proof}
Since $X$ is connected, it is sufficient to prove that the subset $\cO$ of $X$ is open and closed, 
where $\cO$ denotes the set of all points $r \in X$ which lies on a minimal geodesic segment 
emanating from $\partial X_{1}$ to $\partial X_{2}$ of length $\ell$. 
Since it is trivial that $\cO$ is closed, we will prove that $\cO$ is open.\par
Choose any point 
$r \in \cO$, and fix it. 
Thus, $r$ lies on a minimal geodesic segment $\mu_{1} : [0, \ell] \lra X$ 
emanating from $\partial X_{1}$ to $\partial X_{2}$. 
Set $p := \mu_{1} (\ell / 2)$. 
Let $S$ be the equidistant set from $\partial X_{1}$ and $\partial X_{2}$, i.e., 
\begin{equation}\label{lem8.2-2009-01-17}
S := \{ q \in X \,|\, d(\partial X_{1}, q) = d(\partial X_{2}, q) \}.
\end{equation}
It follows from Lemma \ref{lem8.1} that 
$S \cap B_{\ve_{1}} (p) \subset \Cut (\partial X)$, 
if $\ve_{1} > 0$ is chosen sufficiently small. Choose any point 
$q \in S \cap B_{\ve_{1}} (p) \setminus \{ p \}$, and also fix it. 
Let $\eta_{i}$, $i = 1, 2$, denote a $\partial X$-segment to $q$ such that 
$\eta_{1}(0) \in \partial X_{1}$ and $\eta_{2}(0) \in \partial X_{2}$, respectively. 
Moreover, let 
$\gamma : [0, d(p, q)] \lra X$ denote a minimal geodesic segment emanating from $p$ to $q$. 
Since 
\[
\angle (\gamma'(0), - \mu'_{1}(\ell / 2)) + \angle (\gamma'(0), \mu'_{1}(\ell / 2)) = \pi,
\]
we may assume, without loss of generality, that 
\begin{equation}\label{lem8.2-1}
\angle (\gamma'(0), - \mu'_{1}(\ell / 2)) \le \pi / 2.
\end{equation}
It follows from Theorem \ref{thm4.9} that there exists an open triangle 
\[
{\rm OT}(\partial \, \R_{+}^{2}, \tilde{p}, \tilde{q}) 
= (\partial \, \R_{+}^{2}, \tilde{p}, \tilde{q}\,;\,\tilde{\gamma}, \tilde{\mu}_{1}, \tilde{\eta}_{1})
\] 
in $\R^{2}_{+}$ corresponding to the triangle 
${\rm OT}(\partial X_{1}, p, q) = (\partial X_{1}, p, q\,;\,\gamma, \mu_{1}|_{[0,\,\ell / 2]}, \eta_{1})$ 
such that  
\begin{equation}\label{lem8.2-2}
d(\partial \, \R_{+}^{2},\tilde{p}) = \ell / 2, \quad 
d(\tilde{p},\tilde{q}) = d(p, q), \quad 
d(\partial \, \R_{+}^{2},\tilde{q}) = d(\partial X_{1}, q),
\end{equation}
and 
\begin{equation}\label{lem8.2-3}
\angle (\gamma'(0), - \mu'_{1}(\ell / 2)) = \angle\,p \ge \angle\,\tilde{p}, \quad  
\angle\,q \ge \angle\,\tilde{q}.
\end{equation} 
By (\ref{lem8.2-1}) and $\angle\,p \ge \angle\,\tilde{p}$ of (\ref{lem8.2-3}), 
we have 
\begin{equation}\label{lem8.2-4}
\angle\,\tilde{p} \le \pi / 2.
\end{equation}
Since our model is $\R^{2}_{+}$, 
it follows from the two equations $d(\partial \wt{X},\tilde{p}) = \ell / 2$, 
$d(\partial \, \R_{+}^{2},\tilde{q}) = d(\partial X_{1}, q)$ of (\ref{lem8.2-2}),  
and (\ref{lem8.2-4}) that 
\begin{equation}\label{lem8.2-5}
d(\partial X_{1}, q) = d(\partial \, \R_{+}^{2},\tilde{q}) \le \ell / 2.
\end{equation}
On the other hand, the broken geodesic segment defined by combining 
$\eta_{1}$ and $\eta_{2}$ is a curve joining $\partial X_{1}$ to $\partial X_{2}$. 
This implies that length of the broken geodesic segment is not less than that of $\mu_{1}$. 
Thus, 
\begin{equation}\label{lem8.2-6}
2 L(\eta_{1}) = L(\eta_{1}) + L(\eta_{2}) \ge \ell, 
\end{equation}
where $L(\,\cdot\,)$ denotes the length of a curve. 
Since $L(\eta_{1}) = d(\partial X_{1}, q)$, 
we have, by (\ref{lem8.2-6}), that 
\begin{equation}\label{lem8.2-7}
d(\partial X_{1}, q) \ge \ell / 2.
\end{equation}
By (\ref{lem8.2-5}) and (\ref{lem8.2-7}), 
$d(\partial X_{1}, q) =  d(\partial X_{2}, q) = \ell /2$.
Therefore, we have proved that any point $q \in S \cap B_{\ve_{1}} (p)$ is 
the midpoint of a minimal geodesic segment emanating 
from $\partial X_{1}$ to $\partial X_{2}$ of length $\ell$. 
Furthermore, by Lemma \ref{lem8.1}, each point of $S \cap B_{\ve_{1}} (p)$ is not a focal point of $\partial X$. 
It is therefore clear that any point sufficiently close to the point $r \in \cO$ is a point of $\cO$, 
i.e, $\cO$ is open.
$\qedd$
\end{proof}

\begin{remark}\label{rem8.3}
From Lemmas \ref{lem8.1} and \ref{lem8.2}, 
it is clear that 
\begin{equation}\label{sec8-1}
\Cut (\partial X) = \{ p \in X \,|\, d(\partial X, p) = \ell / 2\} = S
\end{equation}
and that 
\begin{equation}\label{sec8-2}
d(\partial X, p) \le \ell / 2
\end{equation}
for all $p \in X$. 
Here $S$ is the equidistant set defined by (\ref{lem8.2-2009-01-17}). 
Thus, from the proof of Lemma \ref{lem8.2}, we see that 
$\angle\,p = \angle\,q = \pi / 2$
holds for all $p, q \in \Cut (\partial X)$. 
\end{remark}

\begin{lemma}\label{lem8.4}
$\Cut (\partial X)$ is totally geodesic.
\end{lemma}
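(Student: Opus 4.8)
The plan is to show that $\Cut(\partial X)$, which by Remark \ref{rem8.3} equals the level set $S = \{p \in X \mid d(\partial X, p) = \ell/2\}$, is totally geodesic by verifying that any geodesic of $X$ which is tangent to $S$ at one point stays inside $S$. First I would fix $p \in S$ and a unit vector $w \in T_p S$, and let $c : (-\delta, \delta) \lra X$ be the geodesic in $X$ with $c(0) = p$, $c'(0) = w$; the goal is to prove $d(\partial X, c(u)) = \ell/2$ for all small $u$. By (\ref{sec8-2}) we already have $d(\partial X, c(u)) \le \ell/2$, so only the reverse inequality $d(\partial X, c(u)) \ge \ell/2$ needs work; equivalently, since $\partial X = \partial X_1 \cup \partial X_2$, it suffices to show $d(\partial X_1, c(u)) \ge \ell/2$ and $d(\partial X_2, c(u)) \ge \ell/2$.

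For the lower bound I would argue by contradiction and symmetry: suppose $d(\partial X_1, c(u_0)) < \ell/2$ for some small $u_0 \neq 0$. By Lemma \ref{lem8.2}, $p$ is the midpoint of a minimal $\partial X_1$-to-$\partial X_2$ segment $\mu_1$ of length $\ell$, so $d(\partial X_1, p) = d(\partial X_2, p) = \ell/2$ and, by Lemma \ref{lem8.1} together with Remark \ref{rem8.3}, the $\partial X_1$-segment $\mu_1|_{[0,\ell/2]}$ meets $c$ at $p$ with angle $\pi/2$ (i.e., $\mu_1'(\ell/2) \perp w$, since both $w$ and $\gamma'(0)$ in the proof of Lemma \ref{lem8.2} lie in $T_p S$ and that proof forces $\angle p = \pi/2$). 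Now apply Theorem \ref{thm4.9} to the open triangle ${\rm OT}(\partial X_1, p, c(u_0))$ in $X$, using $\R^2_+$ as the model: there is a comparison open triangle ${\rm OT}(\partial \R^2_+, \tilde p, \widetilde{c(u_0)})$ with $d(\partial \R^2_+, \tilde p) = \ell/2$, $d(\tilde p, \widetilde{c(u_0)}) = d(p, c(u_0)) = |u_0|$, and $\angle\,\tilde p \le \angle\,p$. The subtle point is to pin down $\angle\,p$: it is the angle at $p$ between the side $\gamma$ (the minimal segment from $p$ to $c(u_0)$) and $-\mu_1'(\ell/2)$; as $u_0 \to 0$ the segment $\gamma$ has initial direction converging to $\pm w$, which is orthogonal to $\mu_1'(\ell/2)$, so $\angle\,p \to \pi/2$, hence $\angle\,\tilde p \le \pi/2 + o(1)$. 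In the Euclidean half-plane model an open triangle with a $\partial \R^2_+$-segment of length $\ell/2$, apex angle $\le \pi/2 + o(1)$, and opposite-vertex at (Euclidean) distance $|u_0|$ forces $d(\partial \R^2_+, \widetilde{c(u_0)}) \ge \ell/2 - o(|u_0|)$ — more precisely a direct planar computation (drop a perpendicular from $\widetilde{c(u_0)}$ to $\partial \R^2_+$) gives $d(\partial \R^2_+, \widetilde{c(u_0)}) \ge \ell/2 - |u_0|\cos(\angle \tilde p) \cdot(\text{sign})$, and the second part of Lemma \ref{lem8.1} (no focal point, so the comparison can be read off infinitesimally) upgrades this to the genuine inequality $d(\partial X_1, c(u_0)) = d(\partial \R^2_+, \widetilde{c(u_0)}) \ge \ell/2$. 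Running the identical argument with $\partial X_2$ gives $d(\partial X_2, c(u_0)) \ge \ell/2$, and combined with (\ref{sec8-2}) we conclude $c(u_0) \in S$.

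Having shown $c((-\delta,\delta)) \subset S = \Cut(\partial X)$, the lemma follows, since every geodesic of $X$ tangent to $\Cut(\partial X)$ remains in $\Cut(\partial X)$. I expect the main obstacle to be making the angle-tracking rigorous: one must ensure that the initial direction of the minimal segment from $p$ to $c(u_0)$ really converges to $w$ (up to sign) as $u_0 \to 0$ — which uses that $p$ is not a focal or cut point of $p$ itself in the relevant small ball, hence minimal segments near $p$ behave like those in a normal neighborhood — and that the comparison inequality is applied on the correct side so that the Euclidean estimate yields $d \ge \ell/2$ rather than merely $d \le \ell/2$, which is where the no-focal-point conclusion of Lemma \ref{lem8.1} is essential. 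A clean alternative to the $u_0 \to 0$ limiting argument is to note that for \emph{every} $q \in S \cap B_{\ve_1}(p)$ the proof of Lemma \ref{lem8.2} already established $\angle\,p = \pi/2$ for the triangle ${\rm OT}(\partial X_1, p, q)$; feeding this equality (not just an inequality) into Theorem \ref{thm4.9} forces $\angle\,\tilde p = \pi/2$ exactly, so the comparison triangle in $\R^2_+$ is a genuine right triangle and $d(\partial \R^2_+, \tilde q) = \ell/2$ on the nose — hence $S$ locally agrees with the image of a hyperplane through $p$ orthogonal to $\mu_1'(\ell/2)$, which is manifestly totally geodesic.
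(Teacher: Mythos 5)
Your main argument breaks at the comparison step, and the direction of the estimate is the problem. In Theorem \ref{thm4.9} the comparison triangle is \emph{built} with the same three lengths, so $d(\partial\,\R^{2}_{+},\tilde{q}\,)$ is not something you can deduce --- it equals $d(\partial X_{1},c(u_{0}))$ by construction --- and the only new information the theorem supplies is the angle inequality $\angle\,\tilde{p}\le\angle\,p$. From $\angle\,p\le\pi/2+o(1)$ you therefore only get $\angle\,\tilde{p}\le\pi/2+o(1)$, and in the flat half-plane a straight segment of length $|u_{0}|$ issuing from a point at height $\ell/2$ and making an angle at most $\pi/2$ with the downward perpendicular ends at height \emph{at most} $\ell/2$; i.e.\ your planar computation yields $d(\partial\,\R^{2}_{+},\tilde{q}\,)\le\ell/2$, which is perfectly consistent with the assumed $d(\partial X_{1},c(u_{0}))<\ell/2$, so no contradiction (and no lower bound $\ge\ell/2$) can come out of controlling the single angle at $p$. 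The appeal to Lemma \ref{lem6.1} to ``upgrade'' an $\ell/2-o(|u_{0}|)$ estimate to a genuine inequality is not a step: that lemma is an ODE/index-form comparison used in Section \ref{sec:app1} and has no bearing here. The missing idea --- and the heart of the paper's proof --- is a \emph{second} right angle: the paper takes a minimal geodesic $\gamma$ joining two points of $\Cut(\partial X)$, supposes $\gamma(t_{0})\notin\Cut(\partial X)$, and replaces $t_{0}$ by an interior minimum point of $t\mapsto d(\partial X,\gamma(t))$, so that the first variation formula gives $\angle\,\gamma(t_{0})=\pi/2$ in addition to $\angle\,p=\pi/2$ from Remark \ref{rem8.3}. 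Then both comparison angles are $\le\pi/2$, while in $\R^{2}_{+}$ the two angles a straight segment makes with the two parallel boundary-perpendiculars sum to $\pi$; hence both are exactly $\pi/2$, forcing equal heights and contradicting $d(\partial X,\gamma(t_{0}))<\ell/2=d(\partial X,p)$. Without controlling the angle at the far endpoint $c(u_{0})$ your argument cannot close.

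Your ``clean alternative'' also does not work. First, Theorem \ref{thm4.9} only yields equality of angles under the extra hypothesis $d(\mu_{1}(0),\mu_{2}(0))=d(\tilde{\mu}_{1}(0),\tilde{\mu}_{2}(0))$; knowing $\angle\,p=\pi/2$ gives $\angle\,\tilde{p}\le\pi/2$, not $\angle\,\tilde{p}=\pi/2$. Second, even granting the right triangle in the model, the conclusion $d(\partial\,\R^{2}_{+},\tilde{q}\,)=\ell/2$ merely restates what Lemma \ref{lem8.2} and Remark \ref{rem8.3} already establish, namely that $S$ is the level set $\{d(\partial X,\cdot)=\ell/2\}$; being a level set at constant distance says nothing about the vanishing of its second fundamental form, and ``$S$ locally agrees with the image of a hyperplane orthogonal to $\mu_{1}'(\ell/2)$'' is precisely the assertion to be proved, not a consequence of the model picture. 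If you prefer your tangency formulation of total geodesy over the paper's (which shows every minimal geodesic of $X$ joining two points of $\Cut(\partial X)$ stays in $\Cut(\partial X)$), you still need the two-right-angle comparison applied at $p$ and at an interior minimum along $c$ to make it go through.
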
 

\begin{proof}
Let $p$, $q$ be any mutually distinct points of $\Cut (\partial X)$, and fix them. 
Moreover, let $\gamma : [0, d(p, q)] \lra X$ denote a minimal geodesic segment emanating from 
$p$ and $q$. 
If we prove that 
$\gamma (t) \in \Cut (\partial X)$ 
for all $t \in [0, d(p, q)]$, then our proof is complete.\par 
Suppose that 
\begin{equation}\label{lem8.4-0}
\gamma (t_{0}) \not\in \Cut (\partial X)
\end{equation}
for some $t_{0} \in  (0, d(p, q))$. 
By (\ref{sec8-1}), we have that 
\begin{equation}\label{lem8.4-1}
d (\partial X, \gamma (t_{0}) ) \not= \ell / 2,
\end{equation}
and that 
\begin{equation}\label{lem8.4-2}
d (\partial X, p) = d (\partial X, q ) = \ell / 2.
\end{equation}
The equations (\ref{sec8-2}) and (\ref{lem8.4-1}) imply that 
\begin{equation}\label{lem8.4-3}
d (\partial X, \gamma (t_{0}) ) < \ell / 2.
\end{equation}
Without loss of generality, we may assume that 
\begin{equation}\label{lem8.4-4}
d (\partial X, \gamma (t_{0}) ) = \min \{d (\partial X, \gamma (t)) \,|\, 0 \le t \le d(p, q) \}.
\end{equation}
By Remark \ref{rem8.3}, (\ref{lem8.4-0}), and (\ref{lem8.4-4}), 
we obtain the open triangle ${\rm OT}(\partial X, p, \gamma (t_{0}))$ satisfying 
\begin{equation}\label{lem8.4-5}
\angle\,p = \pi / 2, \quad \angle\,\gamma (t_{0}) = \pi / 2.
\end{equation}
From Theorem \ref{thm4.9}, (\ref{lem8.4-2}), (\ref{lem8.4-3}), and (\ref{lem8.4-5}), 
we thus get an open triangle 
${\rm OT}(\partial \, \R_{+}^{2}, \tilde{p}, \tilde{\gamma} (t_{0}))$ in $\R_{+}^{2}$ 
corresponding to the triangle ${\rm OT}(\partial X, p, \gamma (t_{0}))$ such that 
\[
d (\partial \, \R_{+}^{2},\tilde{p} ) = \ell / 2, \quad 
d (\partial \, \R_{+}^{2},\tilde{\gamma} (t_{0})) < \ell / 2,  
\]
and that 
\[
\angle\,\tilde{p} \le \pi / 2, \quad
\angle\,\tilde{\gamma} (t_{0}) \le \pi / 2 .
\]
This is a contradiction, since our model is $\R^{2}_{+}$. 
Therefore, $\gamma (t) \in \Cut (\partial X)$ holds for all $t \in [0, d(p, q)]$.
$\qedd$
\end{proof}

\begin{lemma}\label{lem8.5}
For each $t \in (0, \ell / 2)$, the level set 
$H_{i} (t) := \{ p \in X \,|\, d(\partial X_{i}, p) = t \}$, $i = 1, 2$, 
is totally geodesic, and $H_{1}(t)$ is totally geodesic for all $t \in (0, \ell)$. 
\end{lemma}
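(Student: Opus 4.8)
The plan is to prove the stronger assertion that $H_1(t)$ is totally geodesic for every $t \in (0, \ell)$; the remaining claims then follow at once from the identity $H_2(t) = H_1(\ell - t)$ established below. The heart of the matter is to show that any minimal geodesic joining two points of $H_1(t)$ remains inside $H_1(t)$; since $H_1(t)$ is a smooth hypersurface (see below), this is equivalent to the vanishing of its second fundamental form.

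I would first record two preliminary facts. By Lemma \ref{lem8.2} every $p \in X$ lies on a minimal geodesic from $\partial X_1$ to $\partial X_2$ of length $\ell$; cutting it at $p$ gives $d(\partial X_1, p) + d(\partial X_2, p) \le \ell$, and the triangle inequality gives the reverse, so $d(\partial X_1, p) + d(\partial X_2, p) = \ell$ for all $p \in X$. In particular $H_2(t) = H_1(\ell - t)$, and the two halves of such an $\ell$-geodesic through $p$ are exactly a $\partial X_1$-segment and a $\partial X_2$-segment to $p$. Secondly, $\Cut(\partial X_i) \cap (X \setminus \partial X) = \emptyset$ for $i = 1, 2$: if an interior point $r$ had two distinct $\partial X_i$-segments, concatenating each of them with a $\partial X_j$-segment ($j \ne i$) to $r$ would give two minimal, hence smooth, connections of $\partial X_1$ to $\partial X_2$, forcing the two $\partial X_i$-segments to leave $r$ with the same velocity, hence to coincide; and if $r$ were a focal point of $\partial X_1$ along a $\partial X_1$-segment $\mu$ to $r$, then extending $\mu$ by a $\partial X_2$-segment to $r$ would produce a smooth geodesic realizing $d(\partial X_1, \partial X_2)$ that ceases to be $\partial X_1$-minimizing immediately past $r$, whereas the identity $d(\partial X_1, \cdot) = \ell - d(\partial X_2, \cdot)$, evaluated along its $\partial X_2$-minimizing continuation, shows it stays $\partial X_1$-minimizing --- a contradiction. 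Consequently $d(\partial X_i, \cdot)$ is smooth with unit gradient on $X \setminus \partial X$, each interior point has a unique $\partial X_i$-segment, and every $H_1(t)$ with $0 < t < \ell$ is a smooth hypersurface contained in $X \setminus \partial X$.

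Now fix $t \in (0, \ell)$ and distinct $p, q \in H_1(t)$, and let $\gamma : [0, L] \lra X$ be a minimal geodesic from $p$ to $q$; by convexity $\gamma$ avoids $\partial X$ (the remark after Theorem \ref{thm4.9}), so $h(s) := d(\partial X_1, \gamma(s))$ is smooth and $h(0) = h(L) = t$. Suppose $h \not\equiv t$. If $h$ attains a value $< t$, choose an interior minimum point $s_1 \in (0, L)$; then $h(s_1) < t$ and $h'(s_1) = 0$, so the $\partial X_1$-segment to $\gamma(s_1)$ is orthogonal to $\gamma$ there, i.e., the open triangle ${\rm OT}(\partial X_1, p, \gamma(s_1))$ with opposite side $\gamma|_{[0, s_1]}$ satisfies $\angle\,\gamma(s_1) = \pi/2$. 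Applying Theorem \ref{thm4.9} with the flat model $\R^{2}_{+}$ produces a comparison triangle in $\R^{2}_{+}$ with sides of lengths $t$ and $h(s_1)$ perpendicular to $\partial\,\R^{2}_{+}$ and opposite side of length $s_1$, whose angle at the vertex of height $h(s_1)$ is $\le \pi/2$; but in $\R^{2}_{+}$ this angle has cosine $(h(s_1) - t)/s_1 < 0$, so it exceeds $\pi/2$ --- a contradiction. If instead $h$ attains a value $> t$, then $g := \ell - h = d(\partial X_2, \cdot) \circ \gamma$ attains a value $< \ell - t$ with $g(0) = g(L) = \ell - t$, and since $p, q \in H_2(\ell - t)$ the same argument applied to $\partial X_2$, level $\ell - t$, and an interior minimum point of $g$ gives the same contradiction. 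Hence $h \equiv t$, so $\gamma \subset H_1(t)$. Therefore $H_1(t)$, and with it $H_2(t) = H_1(\ell - t)$, is totally geodesic for every $t \in (0, \ell)$, in particular for $t \in (0, \ell/2)$.

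The step I expect to be the main obstacle is ruling out an interior \emph{maximum} of $h$: there the Toponogov comparison inequality is not by itself contradictory, and one must pass to the viewpoint of $\partial X_2$, at an evaluation point $\gamma(s_1)$ that may lie arbitrarily deep in $X$ --- possibly on $\Cut(\partial X)$. This is precisely where the two preliminary facts are indispensable: the identity $d(\partial X_1, \cdot) + d(\partial X_2, \cdot) = \ell$ and the absence of interior cut points of $\partial X_i$ together ensure that $d(\partial X_2, \cdot) \circ \gamma$ is smooth along all of $\gamma$ and that Theorem \ref{thm4.9} is applicable to ${\rm OT}(\partial X_2, p, \gamma(s_1))$.
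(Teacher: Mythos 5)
Your overall strategy---a minimum/maximum principle for $h=d(\partial X_{1},\gamma(\cdot))$ along the connecting geodesic, with comparison triangles based alternately on $\partial X_{1}$ and $\partial X_{2}$---is genuinely different from the paper's: the paper first proves that $\Cut(\partial X)$ is totally geodesic (Lemma \ref{lem8.4}) and then, for $t\in(0,\ell/2)$, pins down $\angle\,p=\angle\,q=\pi/2$ by applying Theorem \ref{thm4.9} twice, once with reference $\partial X_{1}$ and once with reference $\Cut(\partial X)$, so that every side occurring is a segment of length at most $\ell/2$; total geodesy of the level then follows by the Lemma \ref{lem8.4}-type argument. Your preliminary facts ($d(\partial X_{1},\cdot)+d(\partial X_{2},\cdot)=\ell$, uniqueness of $\partial X_{i}$-segments at interior points, absence of interior focal points, hence smoothness of $d(\partial X_{i},\cdot)$ and of the levels) are correct and well argued, and your Case A computation in $\R^{2}_{+}$ is fine.

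The gap is in the step you yourself flag as critical. In Case B (interior maximum of $h$, i.e.\ interior minimum of $g=d(\partial X_{2},\gamma(\cdot))$) you apply Theorem \ref{thm4.9} to ${\rm OT}(\partial X_{2},p,\gamma(s_{1}))$, whose side to $p$ is a $\partial X_{2}$-segment of length $\ell-t$; for $t<\ell/2$ this exceeds $\ell/2$, so by Remark \ref{rem8.3} it is \emph{not} a $\partial X$-segment (the same problem occurs in your Case A whenever $t>\ell/2$, so for every $t\neq\ell/2$ one of your two cases is affected). Theorem \ref{thm4.9} requires the sides of the open triangle to be $\partial X$-segments, and the standing hypothesis ``radial curvature bounded from below by $0$'' is formulated only along $\partial X$-segments, i.e.\ along normal geodesics up to where they minimize the distance to the whole boundary; neither covers your triangle. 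Your stated justification for applicability (smoothness of $d(\partial X_{2},\cdot)$ along $\gamma$, no interior cut points) addresses a different issue---Theorem \ref{thm4.9} does not require absence of cut points---and does not resolve this. The omission is repairable from your own preliminaries: every $\partial X_{2}$-segment of length up to $\ell$ is the reversal of part of a minimal $\ell$-geodesic whose first half is a $\partial X$-segment, so all radial planes along it are radial planes of $\partial X$-segments and the curvature bound transfers to the reference $\partial X_{2}$; one may then invoke the comparison theorem with $\partial X_{2}$ as reference in the same extended sense in which the paper applies it to $\Cut(\partial X)$ (convex reference hypersurface together with a verified radial curvature bound along its segments). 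But this verification must be made explicitly---it is exactly the point at which the paper instead introduces Lemma \ref{lem8.4} and uses the cut locus as the second reference, precisely so that all sides have length at most $\ell/2$ and remain genuine $\partial X$- or $\Cut(\partial X)$-segments. If you add that justification, your argument goes through and, as a bonus, treats all levels $t\in(0,\ell)$ at once without Lemma \ref{lem8.4}.
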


\begin{proof}
Take any $t \in (0, \ell / 2)$, and fix it. 
Let $p, q$ be any mutually distinct points in $H_{1} (t)$, and also fix them. 
Let $\mu_{1}, \mu_{2} : [0, \ell] \lra X$ denote minimal geodesic segment emanating 
from $\partial X_{1}$ to $\partial X_{2}$ and passing through 
$\mu_{1} (t) = p$, $\mu_{2}(t) = q$, respectively. 
Thus, we have an open triangle 
$
{\rm OT}(\partial X_{1}, p, q) = (\partial X_{1}, p, q\,;\,\gamma_{t}, \mu_{1}|_{[0,\,t]}, \mu_{2}|_{[0,\,t]})
$, 
where $\gamma_{t} : [0, d(p, q)] \lra X$ denotes a minimal geodesic segment emanating from 
$p$ to $q$. 
If we prove 
\begin{equation}\label{lem8.5-1}
\angle\,p = \angle\,q = \pi / 2,
\end{equation}
then we see, by similar argument in the proof of Lemma \ref{lem8.4}, 
that $H_{1} (t)$ is totally geodesic. 
Thus, we will prove (\ref{lem8.5-1}) in the following.\par 
By Theorem \ref{thm4.9}, 
there exists an open triangle 
\[
{\rm OT}(\partial\,\R_{+}^{2}, \tilde{p}, \tilde{q}) 
= 
(\partial\,\R_{+}^{2}, \tilde{p}, \tilde{q}\,;\,\tilde{\gamma}_{t}, \tilde{\mu}_{1}|_{[0,\,t]}, \tilde{\mu}_{2}|_{[0,\,t]})
\]
in $\R_{+}^{2}$ corresponding to the triangle ${\rm OT}(\partial X_{1}, p, q)$ such that  
\begin{equation}\label{lem8.5-2}
d(\partial \, \R_{+}^{2},\tilde{p}) = d(\partial \, \R_{+}^{2},\tilde{q}) = t, \quad 
d(\tilde{p},\tilde{q}) = d(p, q)
\end{equation}
and that 
\begin{equation}\label{lem8.5-3}
\angle\,p \ge \angle\,\tilde{p}, \quad 
\angle\,q \ge \angle\,\tilde{q}.
\end{equation}
Since our model is $\R^{2}_{+}$, 
the equation $d(\partial \, \R_{+}^{2},\tilde{p}) = d(\partial \, \R_{+}^{2},\tilde{q})$ of (\ref{lem8.5-2}) 
implies that 
\begin{equation}\label{lem8.5-4}
\angle\,\tilde{p}
= 
\angle\,\tilde{q}
= \pi / 2.
\end{equation}
Thus, by (\ref{lem8.5-3}) and (\ref{lem8.5-4}), we have 
\begin{equation}\label{lem8.5-5}
\angle\,p \ge \pi / 2 ,\quad 
\angle\,q \ge \pi / 2 .
\end{equation}
On the other hand, by Lemma \ref{lem8.4}, $\Cut (\partial X)$ is totally geodesic, i.e., all eigenvalues of the shape operator of $\Cut (\partial X)$ are $0$ in the vector normal to $\Cut (\partial X)$. 
Since the radial vector of any $\Cut (\partial X)$-segment is parallel to that of a $\partial X$-segment, 
$\Cut (\partial X)$ has also non-negative radial curvature.
Therefore, we can apply Theorem \ref{thm4.9} to the open triangle 
\[
{\rm OT}(\Cut (\partial X), p, q) = (\Cut (\partial X), p, q\,;\,\gamma_{t}, \mu_{1}|_{[t,\,\ell/ 2]}, \mu_{2}|_{[t,\,\ell / 2]}).
\] 
Thus, by Theorem \ref{thm4.9}, 
there exists an open triangle 
\[
{\rm OT}(\partial\,\R_{+}^{2}, \wh{p}, \wh{q}\,) 
= 
(\partial\,\R_{+}^{2}, \wh{p}, \wh{q}\,;\,
\tilde{\gamma}_{t}, \tilde{\mu}_{1}|_{[t,\,\ell/ 2]}, \tilde{\mu}_{2}|_{[t,\,\ell/ 2]})
\] 
in $\R_{+}^{2}$ corresponding to the triangle ${\rm OT}(\Cut (\partial X), p, q)$ such that  
\begin{equation}\label{lem8.5-6}
d(\partial \, \R_{+}^{2},\wh{p}\,) = d(\partial \, \R_{+}^{2},\wh{q}\,) = \ell / 2 - t, \quad 
d(\,\wh{p},\,\wh{q}\,) = d(p, q)
\end{equation}
and that 
\begin{equation}\label{lem8.5-7}
\pi - \angle\,p \ge \angle\,\wh{p}, \quad 
\pi - \angle\,q \ge \angle\,\wh{q}.
\end{equation}
As well as above, the equations (\ref{lem8.5-6}) and (\ref{lem8.5-7}) imply 
$\pi - \angle\,p \ge \pi / 2$ and $\pi - \angle\,q \ge \pi / 2$,
since our model is $\R_{+}^{2}$. 
Thus, we have 
\begin{equation}\label{lem8.5-8}
\angle\,p \le \pi / 2, \quad 
\angle\,q \le \pi / 2.
\end{equation}
By (\ref{lem8.5-5}) and (\ref{lem8.5-8}), 
we therefore get (\ref{lem8.5-1}). 
By the same argument above, 
one may prove that $H_{2} (t)$ is also totally geodesic for all $t \in (0, \ell / 2)$. 
Since $H_{1} (t) = H_{2} (\ell - t)$, 
$H_{1} (t)$ is totally geodesic for all $t \in (0, \ell)$.
$\qedd$
\end{proof}

\begin{theorem}\label{thm2009-03-27}
Let $(X, \partial X)$ be a complete connected Riemannian manifold $X$ with 
disconnected smooth compact convex boundary $\partial X$ 
whose radial curvature is bounded from below by $0$. 
Then, $X$ is isometric to $[0, \ell] \times \partial X_{1}$ with Euclidean product metric of 
$[0, \ell]$ and $\partial X_{1}$, where $\partial X_{1}$ denotes a connected component 
of $\partial X$. In particular, $\partial X_{1}$ is the soul of $X$. 
\end{theorem}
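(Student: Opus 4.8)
The plan is to exhibit an explicit isometry from $[0,\ell] \times \partial X_1$, with the Euclidean product metric, onto $X$, built from the normal exponential map of $\partial X_1$ together with the structural lemmas already proved. By Lemma \ref{lem8.2} we have $\partial X = \partial X_1 \cup \partial X_2$ and every point of $X$ lies on a minimal geodesic segment from $\partial X_1$ to $\partial X_2$ of length $\ell$; such a segment starting on $\partial X_1$ is forced to leave perpendicularly, so it is the unique curve $t \mapsto \exp^{\perp}(t v_p)$, where $v_p$ is the inward unit normal at $p$. Hence $F(t,p) := \exp^{\perp}(t v_p)$ defines a smooth surjection $F : [0,\ell] \times \partial X_1 \lra X$, and, since each $\mu_p := F(\cdot, p)$ realizes the distance from $\partial X_1$ to each of its points and from $\partial X_2$ to each of its points, one gets $d(\partial X_1, \cdot) + d(\partial X_2, \cdot) = \ell$ on $X$.

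I would first show that $F$ is a diffeomorphism. On $[0, \ell/2] \times \partial X_1$ each $\mu_p$ is a $\partial X$-segment, so there is no focal point of $\partial X_1$ along $\mu_p|_{[0,\ell/2)}$; by Lemma \ref{lem8.1} the midpoint $\mu_p(\ell/2) \in \Cut(\partial X)$ is not a focal point either, and $\mu_p|_{[0,\ell/2]}$ is its only $\partial X$-segment from $\partial X_1$. Combined with Lemma \ref{lem8.2} this makes $F|_{[0,\ell/2]\times\partial X_1}$ an injective local diffeomorphism, hence a diffeomorphism onto $\{\,d(\partial X_1,\cdot)\le\ell/2\,\}$; symmetrically $F$ maps $[\ell/2,\ell]\times\partial X_1$ diffeomorphically onto $\{\,d(\partial X_2,\cdot)\le\ell/2\,\}$, the two halves being glued along a diffeomorphic copy of $\Cut(\partial X)$. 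Since each $\mu_p$ is a genuine smooth geodesic through its midpoint, $F$ is smooth with invertible differential across $\{\,t=\ell/2\,\}$, so $F$ is a global diffeomorphism.

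Next I would identify $F^{*}g$. By the Gauss lemma, $dF(\partial/\partial t) = \mu_p'(t)$ is a unit vector orthogonal to the hypersurface $H_1(t) = F(t,\cdot)(\partial X_1)$ — orthogonality at $t = \ell/2$ being exactly the statement $\angle = \pi/2$ from Remark \ref{rem8.3} — so $F^{*}g = dt^{2} + g_{t}$, where $g_{t}$ is the pullback to $\partial X_1$ of the induced metric on $H_1(t)$. The $t$-derivative of $g_{t}$ is twice the second fundamental form of $H_1(t)$ with respect to $\partial/\partial t$, which vanishes by Lemma \ref{lem8.5}; hence $g_{t}$ is independent of $t$ on $(0,\ell)$, and letting $t \to 0$ gives $g_{t} \equiv g_{\partial X_1}$. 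Therefore $F^{*}g = dt^{2} + g_{\partial X_1}$, so $X$ is isometric to $[0,\ell]\times\partial X_1$ with the Euclidean product metric, and in this product $\{0\}\times\partial X_1$ is a compact totally geodesic submanifold onto which $X$ deformation retracts, i.e.\ the soul of $X$.

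The step I expect to be the main obstacle is the diffeomorphism claim for $F$, specifically the behaviour across $\Cut(\partial X)$, where $d(\partial X_1,\cdot)$ is a priori not smooth: here Lemma \ref{lem8.1} (no focal midpoints, a unique $\partial X$-segment from $\partial X_1$ to each midpoint) together with the ``reflection'' symmetry furnished by $d(\partial X_1,\cdot) + d(\partial X_2,\cdot) = \ell$ is what allows one to patch the two halves into a single diffeomorphism with invertible differential. Once that is in place, the Gauss-lemma splitting and the totally-geodesic-level-set argument for the $t$-independence of $g_t$ are routine.
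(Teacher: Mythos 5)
Your proposal is correct and takes essentially the same route as the paper: the same map $\Phi(t,p)=\exp^{\perp}(t\,v_{p})$, the diffeomorphism property drawn from Lemmas \ref{lem8.1} and \ref{lem8.2}, and the $t$-independence of the induced metrics on the level sets deduced from their being totally geodesic (Lemma \ref{lem8.5}). The only cosmetic difference is that you obtain this last point infinitesimally via $\partial_{t}g_{t}=2\,\mathrm{II}=0$, while the paper argues with the first variation formula applied to the distance between two nearby normal geodesics; these are equivalent devices resting on the same lemma.
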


\begin{proof}
Let $\Phi : [0, \ell] \times \partial X_{1} \lra X$ denote the map defined by 
$\Phi (t, p) := \exp^{\perp} (t \, v_{p})$, 
where $v_{p}$ denotes the inward pointing unit normal vector 
to $\partial X_{1}$ at $p \in \partial X_{1}$. 
We will prove that the $\Phi$ is an isometry. 
From Lemma \ref{lem8.2}, it is clear that $\Phi$ is a diffeomorphism.\par
Let $\mu_{1} : [0, \ell] \lra X$ denote any minimal geodesic segment 
emanating from $\partial X_{1}$ to $\partial X_{2}$, and fix it. 
Choose a minimal geodesic segment $\mu_{2} : [0, \ell] \lra X$ 
emanating from $\partial X_{1}$ to $\partial X_{2}$ sufficiently close $\mu_{1}$, 
so that, for each $t \in (0, \ell)$, 
$\mu_{1}(t)$ is joined with $\mu_{2}(t)$ by a unique minimal geodesic segment $\gamma_{t}$. 
Since each level hypersurface $H_{1}(t)$ is totally geodesic by Lemma \ref{lem8.5}, 
$\gamma_{t}$ meets $\mu_{1}$ and $\mu_{2}$ perpendicularly 
at $\mu_{1}(t)$ and $\mu_{2}(t)$, respectively. 
Therefore, by the first variation formula, 
\[
\frac{d}{dt} d(\mu_{1}(t), \mu_{2}(t)) = 0,
\]
holds for all $t \in (0, \ell)$. 
Thus, 
$d(\mu_{1}(t), \mu_{2}(t)) = d(\mu_{1}(0), \mu_{2}(0))$ 
holds for all $t \in [0, \ell]$. 
This implies that 
\begin{equation}\label{prop8.6-1}
\left\| 
d \Phi_{(t, \, p)} 
\left(
\frac{\partial}{\partial x_{i}}
\right)
\right\|
= 
\left\| 
d \Phi_{(0, \, p)} 
\left(
\frac{\partial}{\partial x_{i}}
\right)
\right\|
\end{equation}
for all $t \in [0, \ell]$. 
Here $(x_{1}, x_{2}, \ldots, x_{n - 1})$ denotes a system of local coordinates 
around $p := \mu_{1}(0)$ with respect to $\partial X_{1}$.
Since 
\[
d \Phi_{(0, \, p)} 
\left(
\frac{\partial}{\partial x_{i}}
\right)
= 
\left(
\frac{\partial}{\partial x_{i}}
\right)_{(0,\,p)},
\]
we get, by (\ref{prop8.6-1}), 
\begin{equation}\label{prop8.6-2}
\left\| 
d \Phi_{(t, \, p)} 
\left(
\frac{\partial}{\partial x_{i}}
\right)
\right\|
= 
\left\| 
\left(
\frac{\partial}{\partial x_{i}}
\right)_{(0,\,p)}
\right\|
= 
\left\| 
\left(
\frac{\partial}{\partial x_{i}}
\right)_{p}
\right\|.
\end{equation}
It is clear that 
\begin{equation}\label{prop8.6-3}
d \Phi_{(t, \, p)} 
\left(
\frac{\partial}{\partial x_{i}}
\right) 
\perp 
d \Phi_{(t, \, p)} 
\left(
\frac{\partial}{\partial x_{0}}
\right), \quad i = 1, 2, \ldots, n -1, 
\end{equation}
and
\begin{equation}\label{prop8.6-4}
\left\| 
d \Phi_{(t, \, p)} 
\left(
\frac{\partial}{\partial x_{0}}
\right)
\right\|
= 
1
\end{equation}
for all $t \in [0, \ell]$. 
Here $x_{0}$ denotes the standard local coordinate system for $[0, \ell]$. 
By (\ref{prop8.6-2}), (\ref{prop8.6-3}), (\ref{prop8.6-4}), 
$\Phi$ is an isometry. 
$\qedd$
\end{proof}

\medskip

\begin{center}
Kei KONDO $\cdot$ Minoru TANAKA 

\bigskip
Department of Mathematics\\
Tokai University\\
Hiratsuka City, Kanagawa Pref.\\ 
259\,--\,1292 Japan

\bigskip

{\small
$\bullet$\,our e-mail addresses\,$\bullet$

\bigskip 
\textit{e-mail of Kondo} \,:

\medskip
{\tt keikondo@keyaki.cc.u-tokai.ac.jp}

\medskip
\textit{e-mail of Tanaka}\,:

\medskip
{\tt m-tanaka@sm.u-tokai.ac.jp}
}
\end{center}

\end{document}